\renewcommand{\emph}[1]{\textbf{#1}}
\declaretheoremstyle[
	spaceabove=\topsep, spacebelow=\topsep,
	headfont=\normalfont\bfseries,
	postheadspace=.5em,
	qed=$\square$
	]{whiteqed}
\declaretheorem[style=definition,parent=section,title=Theorem]{Thm}
\declaretheorem[style=definition,sibling=Thm,title=Proposition]{Prop}
\declaretheorem[style=definition,sibling=Thm,title=Lemma]{Lem}
\declaretheorem[style=definition,sibling=Thm,title=Corollary]{Cor}
\declaretheorem[style=definition,numbered=no]{Claim}
\declaretheorem[style=whiteqed,sibling=Thm,title=Definition]{Def}
\declaretheorem[style=whiteqed,sibling=Thm,title=Remark]{Rmk}
\declaretheorem[style=whiteqed,sibling=Thm,title=Example]{Expl}
\declaretheorem[style=whiteqed,sibling=Thm,title=Theorem]{Thm*}
\declaretheorem[style=whiteqed,sibling=Thm,title=Proposition]{Prop*}
\declaretheorem[style=whiteqed,sibling=Thm,title=Corollary]{Cor*}
\declaretheorem[style=whiteqed,sibling=Thm,title=Lemma]{Lem*}
\newcommand{\defarrow}{\; \xLeftrightarrow{\mathrm{def.}} \;}
\newcommand{\bvec}[1]{\bm{#1}}
\newcommand{\smid}{\mathrel{;}}
\mathchardef\mhyphen="2D
\newcommand{\dbracket}[1]{\left \llbracket #1 \right \rrbracket}
\newcommand{\dvert}[1]{\left \lvert #1 \right \rvert}
\newcommand{\dVert}[1]{\left \lVert #1 \right \rVert}
\newcommand{\Set}[1]{\left \{\, #1 \, \right \}}
\let\imply=\rightarrow
\let\limply=\rightarrow
\newcommand{\id}{\mathrm{id}}
\newcommand{\op}{\mathrm{op}}
\newcommand{\isoarrow}{\stackrel{\sim}{\to}}
\DeclareMathOperator{\Hom}{Hom}
\DeclareMathOperator*{\rlim}{\underrightarrow{\lim}}
\DeclareMathOperator{\Sub}{Sub}
\title{\Large Sheaves of Structures, Heyting-Valued Structures, and a Generalization of {\L}o\'{s}'s Theorem}
\author{Hisashi Aratake%
	\thanks{\textit{E-mail address}: \texttt{aratake@kurims.kyoto-u.ac.jp}\\
		}
	}
\affil{Research Institute for Mathematical Sciences,\\
	Kyoto University, Kyoto, Japan}
\date{}
\begin{document}
\maketitle

\begin{abstract}
	Sheaves of structures are useful to give constructions in universal algebra and model theory.
	We can describe their logical behavior in terms of Heyting-valued structures.
	In this paper, we first provide a systematic treatment of sheaves of structures and Heyting-valued structures from the viewpoint of categorical logic.
	We then prove a form of {\L}o\'{s}'s theorem for Heyting-valued structures.
	We also give a characterization of Heyting-valued structures for which {\L}o\'{s}'s theorem holds with respect to any maximal filter.
\end{abstract}

\setcounter{section}{-1}

\section{Introduction}

Sheaf-theoretic constructions have been used in universal algebra and model theory.
In this context, sheaves of abelian groups or rings in geometry are generalized to \textit{sheaves of structures}.
We can obtain, for example, the product (resp.\ an ultraproduct) of a family of structures from some sheaf by taking the set of global sections (resp.\ a stalk).
This viewpoint originated from the early literature \cite{Comer1974}, \cite{Ell1974} and \cite{Mac1973}.
In combination with the theory of sheaf representations of algebras, 
Macintyre \cite{Mac1973} succeeded in giving model-companions of some theories of commutative rings
by transferring model-theoretic properties from stalks to global sections.

On the other hand, sheaves have another description as \textit{Heyting-valued sets}.
The notion of Heyting-valued sets originally arises from that of Boolean-valued models of set theory, which was introduced in relation to Cohen's forcing.
The development of topos theory in the early seventies, mainly due to Lawvere \& Tierney, revealed profound relationships between toposes and models of set theory;
objects in a topos can be regarded as ``generalized sets'' in a universe.
Subsequently, Fourman \& Scott \cite{FourSco1979} and Higgs 
\footnote{Originally in his preprint written in 1973, part of which was later published as \cite{Higgs1984}.}
independently established the categorical treatment of Heyting-valued sets (see \textbf{Remark \ref{rmk:relation-to-set-theory}}).
The category $ \mathbf{Sh}(X) $ of sheaves of sets on a space $ X $ and the category $ \mathbf{Set}(\mathcal{O}(X)) $ of $ \mathcal{O}(X) $-valued sets 
turned out to be categorically equivalent.

Some model-theorists of that era immediately applied Heyting-valued sets to concrete problems in sheaf-theoretic model theory.
However, general methods of Heyting-valued model theory have not been explored enough,
though Fourman \& Scott mentioned such a direction in the preamble of \cite{FourSco1979}.
Even worse, we are not aware of any clear explanation of the relationship between sheaves of structures and Heyting-valued structures.
In this paper, employing well-established languages of categorical logic,
we will give a gentle and coherent account of Heyting-valued semantics of first-order logic from the categorical point of view,
and will apply that framework to obtain a generalization of {\L}o\'{s}'s theorem for Heyting-valued structures.
We also provide a characterization of Heyting-valued structures for which {\L}o\'{s}'s theorem holds w.r.t.\ any maximal filter.
Our theorems improve the works of Caicedo \cite{Cai1995} and Pierobon \& Viale \cite{PieVia2020}.
While our principal examples of Heyting-valued sets are sheaves on topological spaces,
other natural examples include sheaves on the complete Boolean algebra of regular open sets and Boolean-valued sets on the measure algebra.
Therefore, we will develop our theory based on any complete Heyting algebra (a.k.a.\ a frame or a locale) rather than a topological space.

The intended audience for this paper is anyone who has interests both in model theory and in categorical logic.
We assume some familiarity with topos theory and first-order categorical logic.
Most categorical prerequisites are covered by \cite{SGL}.
In \S\ref{subsec:structures-in-topos}, we will recall some elements of first-order categorical logic. 

The areas related to this paper (and its sequels in the future) are diverse,
including model theory, universal algebra, set theory, categorical logic, topos theory, and their applications to ordinary mathematics.
The author did his best to ensure that the reader can follow the scattered literature (especially in model theory and topos theory) on each occasion during the course.

\paragraph{The structure of this paper:}
In \S\ref{sec:preliminaries}, we will begin with preliminaries on sheaves and Heyting-valued sets.
After we see basic properties of Heyting-valued sets and morphisms between them, we will give an outline of the equivalence of sheaves and Heyting-valued sets.
We also provide some details on the topos $ \mathbf{Set}(\mathcal{O}(X)) $ of $ \mathcal{O}(X) $-valued sets.
In \S\ref{sec:sheaf-of-str-and-H-valued-str}, we will study structures in the toposes $ \mathbf{Sh}(X) $ and $ \mathbf{Set}(\mathcal{O}(X)) $
and the relationship between them.
We will also introduce forcing values of formulas categorically.
In \S\ref{sec:filt-quot-and-Los-thm}, observing that sheaves of structures generalize some model-theoretic constructions,
we will introduce a further generalization of filter-quotients of sheaves to Heyting-valued structures 
and prove {\L}o\'{s}'s theorem and the characterization theorem.
In \S\ref{sec:conclusion}, we will indicate some possible future directions with an expanded list of previous works.

\paragraph*{}
Closing the introduction, we have to mention Loullis' work \cite{Lou1979} on Boolean-valued model theory.
The starting point of this research was trying to digest his work from a modern categorical viewpoint,
though our work is still too immature to give the reader a full explanation of his contribution.
If the author had not met his work, this paper would not have existed.
The author regrets his untimely death, according to \cite{BunRey1981}, in 1978.

\paragraph{Acknowledgment:}
The author is grateful to his supervisor, Kazushige Terui, for careful reading of an earlier draft and many helpful suggestions on the presentation of this paper.
He also thanks Soichiro Fujii for useful comments.

\section{Sheaves and Heyting-Valued Sets} \label{sec:preliminaries}
Heyting-valued sets were introduced independently by Higgs \cite{Higgs1984} and by Fourman \& Scott \cite{FourSco1979}.
In this section, we will review the construction of the category $ \mathbf{Set}(\mathcal{O}(X)) $ of $ \mathcal{O}(X) $-valued sets for a locale $ X $,
its relation to sheaves on $ X $, and its categorical structures as a topos.
Most results are covered by \cite{Higgs1984}, \cite{FourSco1979}, \cite[\S C1.3]{Elephant} and \cite[Chapter 2]{HoCA3}.
For the reader's convenience, we will occasionally give brief sketches of proofs.

For aspects of Heyting-valued sets in intuitionistic logic, see \cite[Chapters 13--14]{TvD}.
As a category, $ \mathbf{Set}(\mathcal{O}(X)) $ is a prototypical example of the topos obtained from a tripos
(see \cite{HJP1980} and \cite[Chapter 2]{vOosten2008}).
The internal logic of $ \mathbf{Set}(\mathcal{O}(X)) $ is reduced to the logic of tripos.
Walters \cite{Walters1981}, \cite{Walters1982} developed another direction of generalization of Heyting-valued sets.

\subsection{Heyting-Valued Sets} \label{subsec:H-valued-set}

\begin{Def}
	A \emph{frame} is a complete lattice satisfying the infinitary distributive law:
	\[ a \wedge \bigvee_i b_i = \bigvee_i a \wedge b_i. \]
	In particular, any frame has $ 0 $ and $ 1 $.
\end{Def}

A frame is the same thing as a complete Heyting algebra:
the infinitary distributive law for a frame $ H $ says that each monotone map $ a \wedge (-) \colon H \to H $ preserves arbitrary joins.
This happens exactly when each map $ a  \wedge (-) $ has a right adjoint $ a \imply (-) \colon H \to H $, i.e., a monotone map satisfying
\[ \forall b,c \in H,\, [a \wedge b \leq c \iff b \leq a \limply c ]. \]
This fact follows either from category theory (the \textit{General Adjoint Functor Theorem}), or from a direct construction
\[ a \imply c := \bigvee \Set{b \in H \smid a \wedge b \leq c}. \]

On the other hand, frame homomorphisms differ from those for complete Heyting algebras (and even those for complete lattices):
\begin{Def}
	Let $ H,H' $ be frames.
	A \emph{frame homomorphism} $ h \colon H \to H' $ is a map from $ H $ to $ H' $ preserving finite meets and arbitrary joins.
	Let $ \mathbf{Frm} $ denote the category of frames.
\end{Def}

Similarly to the above, any frame homomorphism $ h \colon H \to H' $ has a right adjoint $ k \colon H' \to H $ given by
\[ k(b) = \bigvee \Set{a \in H \smid h(a) \leq b}. \]

Any continuous map $ f \colon X \to Y $ of topological spaces gives rise to a frame homomorphism $ f^* \colon \mathcal{O}(Y) \to \mathcal{O}(X) $ given by $ f^*(V)=f^{-1}(V) $,
where $ \mathcal{O}(X) $ (resp.~$ \mathcal{O}(Y) $) is the frame of open sets of $ X $ (resp.~$ Y $).
The functor $ \mathcal{O}(-) \colon \mathbf{Top}^{\op} \to \mathbf{Frm} $ is full and faithful on sober spaces
(i.e., spaces satisfying a suitable axiom between $ T_0 $ and $ T_2 $).
Therefore, it translates the language of spaces to that of frames, and we may consider frames as ``point-free'' spaces.
This justifies the following definition:

\begin{Def} \label{def:cat-of-locales}
	A frame considered as an object of $ \mathbf{Frm}^{\op } $ is called a \emph{locale}.
	We denote $ \mathbf{Frm}^{\op } $ by $ \mathbf{Loc} $ and the frame corresponding to a locale $ X \in \mathbf{Loc}$ by $ \mathcal{O}(X) $.
	We will write $ U,V, $ etc.~for elements of $ \mathcal{O}(X) $ and $ 0_X $ (resp.~$ 1_X $) for the smallest (resp.~largest) element.
	
	For a morphism $ f \colon X \to Y $ of locales, the corresponding frame homomorphism is denoted by $ f^* \colon \mathcal{O}(Y) \to \mathcal{O}(X) $.
	$ f^* $ has a right adjoint $ f_* \colon \mathcal{O}(X) \to \mathcal{O}(Y) $.
	Morphisms of locales are also called continuous maps of locales.
\end{Def}

By writing $ X_{\ell} $ for the locale given by a topological space $ X $, i.e., $ \mathcal{O}(X_{\ell}) = \mathcal{O}(X) $,
we now have a functor $ (-)_{\ell} \colon \mathbf{Top} \to \mathbf{Loc} $.
It has a right adjoint $ \mathrm{pt} \colon \mathbf{Loc} \to \mathbf{Top} $ sending a locale $ X $ 
to the space $ \mathrm{pt}(X) $ of ``points of $ X $'' (see, e.g., \cite[Chapter IX]{SGL}).
For more on frames and locales in point-free topology, see \cite{JohSS} and \cite{PicPulFL}.

We are now ready to define Heyting-valued sets.
In the remainder of this section, we fix a locale $ X $.
\begin{Def}
	An \emph{$ \mathcal{O}(X) $-valued set} $ (A,\alpha) $ is a pair of a set $ A $ and a map $ \alpha \colon A \times A \to \mathcal{O}(X) $ such that
	\begin{itemize}
		\item $ \forall a,b \in A,\, \alpha(a,b) = \alpha(b,a) $
		\item $ \forall a,b,c \in A,\, \alpha(a,b) \land \alpha(b,c) \leq \alpha(a,c) $ \qedhere
	\end{itemize}
\end{Def}

In the logic of tripos (associated with $ X $), $ \alpha $ is a ``partial equivalence relation'' on $ A $.
Instead of $ \alpha(a,b) $, the notation $ \dbracket{a=b} $ is frequently used in the literature.
We introduce a few conventions:
\begin{itemize}
	\item $ \alpha(a) := \alpha(a,a) $ is called the \emph{extent} of $ a $. Note that $ \alpha(a,b) \leq \alpha(a) \land \alpha(b) $.
	
	\item If $ \alpha(a) = 1_X $, then $ a $ is called a \emph{global element} of $ (A,\alpha) $.
\end{itemize}

Morphisms of Heyting-valued sets should be ``functional relations'' (again in the logic of tripos).

\begin{Def}
	Let $ (A,\alpha) , (B,\beta) $ be $ \mathcal{O}(X) $-valued sets.
	A \emph{morphism $ \varphi \colon (A,\alpha) \to (B,\beta) $ of $ \mathcal{O}(X) $-valued sets} is a map $ A \times B \to \mathcal{O}(X) $ which satisfies
	\begin{align*}
		\forall a,a' \in A,\, \forall b,b' \in B,\, & \quad \alpha(a,a') \wedge \varphi(a,b) \wedge \beta(b,b') \leq \varphi(a',b'), \\
		\forall a \in A,\, \forall b,b' \in B,\, & \quad \varphi(a,b) \wedge \varphi(a,b') \leq \beta(b,b'), \\
		\forall a \in A,\, & \quad \alpha(a) = \bigvee_{b \in B} \varphi(a,b).
	\end{align*}
	In particular, $ \varphi(a,b) \leq \alpha(a) \wedge \beta(b) $ always holds.
	If $ \psi \colon (B,\beta) \to (C,\gamma) $ is another morphism, we can define the composite $ \psi \circ \varphi $ by
	\[ (\psi \circ \varphi)(a,c) = \bigvee_{b \in B} \varphi(a,b) \wedge \psi(b,c). \]
	We write $ \mathbf{Set}(\mathcal{O}(X)) $ for the category of $ \mathcal{O}(X) $-valued sets and morphisms,
	where the identity $ \id_{(A,\alpha)} $ is given by $ \alpha $ itself.
\end{Def}

\begin{Rmk} \label{rmk:relation-to-set-theory}
	In set theory, for a frame $ H $, we can construct a model $ V^{(H)} $ of intuitionistic set theory (\cite[Chapter IV]{BellIST}). 
	$ V^{(H)} $ is called the \emph{Heyting-valued universe}.
	The category $ \mathbf{Set}(H) $ is regarded as a categorical counterpart of $ V^{(H)} $ (\cite[Appendix]{BellBVMIP}, \cite{ACM2019}),
	and we can take arguments and examples from set theory to investigate Heyting-valued sets (cf.~\cite{PieVia2020}).
\end{Rmk}

We list useful facts on Heyting-valued sets, some of which will not be used in this paper.
\begin{Lem} \label{lem:when-are-two-morphisms-identical}
	Two morphisms $ \varphi,\psi \colon (A,\alpha) \rightrightarrows (B,\beta) $ are identical if
	\[ \forall a \in A,\, \forall b \in B ,\, \varphi(a,b) \leq \psi(a,b). \]
\end{Lem}
\begin{proof}
	Suppose $ \forall a \in A,\, \forall b \in B ,\, \varphi(a,b) \leq \psi(a,b) $. Then,
	\begin{align*}
		\psi(a,b) & = \psi(a,b) \wedge \alpha(a) = \psi(a,b) \wedge \bigvee_{b'} \varphi(a,b')  \\
		& \leq \bigvee_{b'}[\varphi(a,b') \wedge \psi(a,b') \wedge \psi(a,b) ] \leq \bigvee_{b'} [\varphi(a,b') \wedge \beta(b',b)] = \varphi(a,b).
	\end{align*}
\end{proof}

\begin{Prop*} \label{prop:monic-and-epic-morphisms}
	Let $ \varphi \colon (A,\alpha) \to (B,\beta) $ be a morphism in $ \mathbf{Set}(\mathcal{O}(X)) $.
	\begin{enumerate}
		\item $ \varphi $ is a monomorphism if and only if
		\[ \forall a,a' \in A,\, \forall b \in B,\, \varphi(a,b) \wedge \varphi(a',b) \leq \alpha(a,a'). \]
		
		\item $ \varphi $ is an epimorphism if and only if
		\[ \forall b \in B,\, \beta(b) = \bigvee_{a \in A} \varphi(a,b). \]
		
		\item $ \varphi $ is an isomorphism if and only if it is monic and epic.
		In other words, $ \mathbf{Set}(\mathcal{O}(X)) $ is a balanced category. 
		If $ \varphi $ is an isomorphism, $ \varphi^{-1} $ is given by $ \varphi^{-1}(b,a) = \varphi(a,b) $. \qedhere
	\end{enumerate}
\end{Prop*}

\begin{Def}
	We say that a morphism $ \varphi \colon (A,\alpha) \to (B,\beta) $ is \emph{represented by a map} $ h \colon A \to B $ when
	\[ \forall a \in A,\, \forall b  \in B,\, \varphi(a,b) = \alpha(a) \wedge \beta(ha,b). \qedhere \]
\end{Def}

\begin{Prop*} \label{prop:representable-morphisms}
	\ 
	
	\begin{enumerate}
		\item A morphism $ \varphi \colon (A,\alpha) \to (B,\beta) $ is represented by a map $ h \colon A \to B $ if and only if 
		\[ \forall a \in A,\, \forall b  \in B,\, \varphi(a,b) \leq \beta(ha,b). \]
		
		\item A map $ h \colon A \to B $ represents some morphism from $ (A,\alpha) $ to $ (B,\beta) $ if and only if 
		\[ \forall a, a' \in A,\, \alpha(a,a') \leq \beta(ha,ha'). \]
		Moreover, if $ h $ further satisfies $ \alpha(a) = \beta(ha) $ for all $ a \in A $, 
		then the morphism $ \varphi $ represented by $ h $ is given simply by $ \varphi(a,b) = \beta(ha,b) $.
		
		\item Suppose two maps $ h, k \colon A \to B $ represent some morphisms.
		They represent the same morphism if and only if
		\[ \forall a \in A,\, \alpha(a) \leq \beta(ha,ka). \]
		
		\item Let $ (C,\gamma) $ be another $ \mathcal{O}(X) $-valued set and $ \psi \colon (B,\beta) \to (C,\gamma) $ be another morphism.
		If $ \varphi $ (resp.~$ \psi $) is represented by a map $ h $ (resp.~$ k $), then $ \psi\varphi $ is represented by $ kh $. \qedhere
	\end{enumerate}
\end{Prop*}

\begin{Prop*} \label{prop:monic-and-epic-representables}
	Let $ \varphi \colon (A,\alpha) \to (B,\beta) $ be a morphism represented by $ h $.
	\begin{itemize}
		\item $ \varphi $ is monic $ \iff \forall a,a' \in A,\, \alpha(a,a') = \alpha(a) \wedge \alpha(a') \wedge \beta(ha,ha') $.
		
		\item $ \varphi $ is epic $ \iff \forall b \in B,\, \beta(b) = \bigvee_a [\alpha(a) \wedge \beta(ha,b)] $.
	\end{itemize}
	Further, if $ \alpha(a) = \beta(ha) $ for all $ a \in A $, these conditions reduce to
	\begin{itemize}
		\item $ \varphi $ is monic $ \iff \forall a,a' \in A,\, \alpha(a,a') = \beta(ha,ha') $.
		
		\item $ \varphi $ is epic $ \iff \forall b \in B,\, \beta(b) = \bigvee_a \beta(ha,b) $. \qedhere
	\end{itemize}
\end{Prop*}

Combining the above facts, we obtain
\begin{Cor*} \label{cor:isomorphisms-induced-by-maps}
	If $ h $ satisfies $ \alpha(a) = \beta(ha) $ for all $ a \in A $,
	then $ h $ represents an isomorphism exactly when the following conditions hold
	\[ \forall a,a' \in A,\, \alpha(a,a') = \beta(ha,ha'), \; \text{and} \; \forall b \in B,\,  \beta(b) = \bigvee_a \beta(ha,b). \]
	If $ h $ satisfies these conditions, the induced isomorphism $ \varphi(a,b)=\beta(ha,b) $ has the inverse $ \varphi^{-1}(b,a)=\beta(ha,b) $.
\end{Cor*}

\subsection{Sheaves on Locales and Complete Heyting-Valued Sets} \label{subsec:sheaves-vs-OX-sets}
Continuing from the previous section, we fix a locale $ X $.
Let us discuss the relationship between sheaves on $ X $ and $ \mathcal{O}(X) $-valued sets.
Our presentation style here is largely due to \cite[\S C1.3]{Elephant}.

\begin{Def}
	\ 
	
	\begin{enumerate}
		\item A \emph{presheaf} on $ X $ is a functor $ \mathcal{O}(X)^{\op} \to \mathbf{Set} $.
		For a presheaf $ P $ and $ U \in \mathcal{O}(X) $, elements of $ PU $ (resp.\ of $ P1_X $) are called \emph{sections} of $ P $ on $ U $ (resp.\ \emph{global sections} of $ P $).
		If $ a \in PU $ and $ W \leq U $, we will write $ a|_W $ for $ P(W \leq U)(a) $.
		
		\item A presheaf $ P $ is said to be a \emph{sheaf} on $ X $ when it satisfies the following condition:
		For any covering $ \{U_i\}_i $ of $ U \in \mathcal{O}(X) $ (i.e., $ U = \bigvee_i U_i $) and any family $ \{a_i\}_i $ of sections $ a_i \in PU_i $,
		if $ a_i|_{U_i \wedge U_j} = a_j|_{U_i \wedge U_j} $ for all $ i,j $, then there exists a unique $ a \in PU $ such that $ a|_{U_i} =a_i $ for all $ i $.
		
		\item \emph{Morphisms of presheaves} are defined to be natural transformations.
		The functor category $ \mathbf{Set}^{\mathcal{O}(X)^{\op}} $ is also called the category of presheaves.
		Let $ \mathbf{Sh}(X) $ denote its full subcategory spanned by sheaves. \qedhere
	\end{enumerate}
\end{Def}

We can associate a presheaf $ P $ with an $ \mathcal{O}(X) $-valued set $ \Theta(P) := (\coprod_U PU,\delta_P) $ as follows:
for $ (a,b) \in PU \times PV \subseteq \coprod_U PU \times \coprod_U PU $,
\[ \delta_P(a,b) := \bigvee \Set{W \leq U \land V \smid a|_W = b|_W}. \]

Notice that
\begin{itemize}
	\item $ a \in PU $ if and only if $ \delta_P(a) = U $.
	Hence, global elements of $ \Theta(P) $ are exactly global sections of $ P $.
	\item If $ P $ is a sheaf, then $ \delta_P(a,b) $ is the largest element on which the restrictions of $ a $ and $ b $ coincide.
	Moreover, if $ X $ is a topological space,
	\[ \delta_P(a,b) = \Set{x \in X \smid a_x = b_x }, \]
	where $ a_x,b_x $ are the germs of $ a,b $ over $ x $.
\end{itemize}

For a morphism $ \xi \colon P \to Q $ of presheaves on $ X $, the induced map $ h \colon \coprod_U PU \to \coprod_{U} QU $ satisfies
\[ \forall (a,b) \in \biggl  (\coprod_U PU \biggr )^2,\, \delta_P(a,b) \leq \delta_Q(ha,hb) \; \text{and}\; \delta_P(a) = \delta_Q(ha). \]
Therefore, by \textbf{Proposition \ref{prop:representable-morphisms}}, $ h $ represents a morphism $ \Theta(\xi) \colon \Theta(P) \to \Theta(Q) $.
This construction gives a functor $ \Theta \colon \mathbf{Set}^{\mathcal{O}(X)^{\op}} \to \mathbf{Set}(\mathcal{O}(X)) $.

Notice that a presheaf $ P $ is separated if and only if, for any $a,b$, $ \delta_P(a)=\delta_P(b)=\delta_P(a,b) $ implies $ a = b $.
Fourman \& Scott \cite{FourSco1979} say Heyting-valued sets are separated if the latter condition holds.
To give a similar characterization of sheaves, we need a more involved definition.

\begin{Def}
	For an $ \mathcal{O}(X) $-valued set $ (A,\alpha) $, define a preorder $ \sqsubseteq $ on $ A $ by
	\[ b \sqsubseteq a \defarrow \alpha(a,b) = \alpha(b). \]
	$ (A,\alpha) $ is said to be \emph{complete} if the following conditions hold:
	\begin{itemize}
		\item $ \sqsubseteq $ is a partial order. (This is equivalent to separatedness.)
		
		\item For any $ a \in A $ and $ U \leq \alpha(a) $, there exists $ b \in A $ such that $ b \sqsubseteq a $ and $ \alpha(b) = U $.
		(If $ \sqsubseteq $ is a partial order, such $ b $ is uniquely determined and denoted by $ a|_U $.)
		
		\item If a family $ \{a_i\}_i $ of elements of $ A $ is pairwise compatible, i.e., $ \alpha(a_i,a_j) = \alpha(a_i) \wedge \alpha(a_j) $ for all $ i,j $,
		then it has a supremum w.r.t.\ $ \sqsubseteq $. (The supremum is called an amalgamation of $ \{a_i\}_i $.)
	\end{itemize}
	Let $ \mathbf{CSet}(\mathcal{O}(X)) $ denote the full subcategory of $ \mathbf{Set}(\mathcal{O}(X)) $ spanned by complete $ \mathcal{O}(X) $-valued sets.
\end{Def}

\begin{Prop}
	A presheaf $ P $ on $ X $ is a sheaf if and only if $ \Theta(P) $ is complete as an $ \mathcal{O}(X) $-valued set. 
	Moreover, for any complete $ \mathcal{O}(X) $-valued set $ (A,\alpha) $,
	there exists a sheaf $ P $ on $ X $ such that $ (A,\alpha) $ and $ \Theta(P) $ are isomorphic.
\end{Prop}
\begin{proof}
	The latter part:
	if $ (A,\alpha) $ is complete, then by putting 
	\[ PU := \Set{a \in A \smid \alpha(a) = U}, \]
	we have a desired sheaf $ P $ on $ X $.
\end{proof}

We can rephrase completeness in terms of singletons.

\begin{Def}
	Let $ (A,\alpha) $ be an $ \mathcal{O}(X) $-valued set.
	A \emph{singleton} on $ (A,\alpha) $ is a function $ \sigma \colon A \to \mathcal{O}(X) $ such that
	\[ \forall a,a' \in A,\, \sigma(a) \wedge \alpha(a,a') \leq \sigma(a') \; \text{and}\; \sigma(a) \wedge \sigma(a') \leq \alpha(a,a'). \]
	In particular, $ \sigma(a) \leq \alpha(a) $ always holds.
\end{Def}

For each $ a \in A $, the map $ \sigma_a:=\alpha(a,-) $ is a singleton of $ (A,\alpha) $.

\begin{Lem}
	For an $ \mathcal{O}(X) $-valued set $ (A,\alpha) $, TFAE:
	\begin{enumerate}[label=(\roman*)]
		\item  $ (A,\alpha) $ is complete.
		\item Any singleton of $ (A,\alpha) $ is of the form $ \sigma_a $ for a uniquely determined $ a $.
	\end{enumerate}
\end{Lem}
\begin{proof}
	(i)$ \Rightarrow $(ii):
	Suppose $ (A,\alpha) $ is complete. Let $ \sigma $ be a singleton on $ (A,\alpha) $.
	Then the family $ \Set{a|_{\sigma(a)} \smid a \in A} $ is pairwise compatible, and its supremum $ s \in A $ satisfies $ \sigma=\sigma_s $.
	
	\noindent (ii)$ \Rightarrow $(i):
	Suppose the condition (ii) holds. If $ \alpha(a)=\alpha(a')=\alpha(a,a') $, then $ \sigma_a=\sigma_{a'} $ and hence $ a=a' $.
	Thus $ \sqsubseteq $ is anti-symmetric.
	If $ a \in A $ and $ U \leq \alpha(a) $, the map $ \alpha(a,-) \wedge U $ is a singleton, and  we then have the restriction $ a|_{U} $.
	If a family $ \{a_i\}_i $ is pairwise compatible, the map $ \bigvee_i \alpha(a_i,-) $ is a singleton, and we then have the amalgamation.
\end{proof}

\begin{Lem} \label{lem:morphism-to-complete-set}
	Let $ (A,\alpha), (B,\beta) $ be $ \mathcal{O}(X) $-valued sets with $ (B,\beta) $ complete.
	Each morphism $ \varphi \colon (A,\alpha) \to (B,\beta) $ is represented by 
	a unique map $ h \colon A \to B $ which satisfies $ \alpha(a,a') \leq \beta(ha,ha') $ and $ \alpha(a) = \beta(ha) $ for all $ a,a' \in A $.
\end{Lem}
\begin{proof}
	For any fixed $ a \in A $, the map $ \varphi(a,-) $ is a singleton of $ (B,\beta) $.
	By completeness, we can find a unique $ ha \in B $ such that $ \varphi(a,b) = \beta(ha,b) $ for every $ b \in B$.
	This defines a map $ h \colon A \to B $ representing $ \varphi $ and having the desired properties.
\end{proof}

This lemma and \textbf{Proposition \ref{prop:representable-morphisms}}(4) yield
\begin{Prop*}
	$ \Theta $ induces a categorical equivalence between $ \mathbf{Sh}(X) $ and $ \mathbf{CSet}(\mathcal{O}(X)) $.
\end{Prop*}

On the other hand, we can also show that $ \mathbf{Set}(\mathcal{O}(X)) $ and $ \mathbf{CSet}(\mathcal{O}(X)) $ are categorically equivalent.

\begin{Prop*} \label{prop:completion-of-OX-sets}
	Let $ \tilde{A} $ be the set of singletons on $ (A,\alpha) $. Define a valuation $ \tilde{\alpha} $ on $ \tilde{A} $ by
	\[ \tilde{\alpha}(\sigma,\tau) := \bigvee_{a \in A} \sigma(a) \wedge \tau(a). \]
	Then $ (\tilde{A},\tilde{\alpha}) $ is a complete $ \mathcal{O}(X) $-valued set, and
	the map $ \tilde{A} \times A \ni (\sigma,a) \mapsto \sigma(a) \in \mathcal{O}(X) $ represents an isomorphism $ (A,\alpha) \simeq (\tilde{A},\tilde{\alpha}) $.
	We call $ (\tilde{A},\tilde{\alpha}) $ the \emph{completion of $ (A,\alpha) $}.
	
	For a morphism $ \varphi \colon (A,\alpha) \to (B,\beta) $, let $ \tilde{\varphi} $ be the composite of
	\[ (\tilde{A},\tilde{\alpha}) \isoarrow (A,\alpha) \xrightarrow{\varphi} (B,\beta) \isoarrow (\tilde{B},\tilde{\beta}). \]
	Then $ \tilde{(-)} \colon \mathbf{Set}(\mathcal{O}(X)) \to \mathbf{CSet}(\mathcal{O}(X)) $ becomes a functor and
	also gives a quasi-inverse of the inclusion functor.
\end{Prop*}

\begin{Cor*} \label{cor:Sh(X)=Set(OX)}
	The categories $ \mathbf{Sh}(X) $, $ \mathbf{Set}(\mathcal{O}(X)) $ and $ \mathbf{CSet}(\mathcal{O}(X)) $ are categorically equivalent.
	In particular, $ \mathbf{Set}(\mathcal{O}(X)) $ is a Grothendieck topos.
\end{Cor*}

\subsection{Topos Structure of $ \mathbf{Set}(\mathcal{O}(X)) $} \label{subsec:topos-structure-of-Set(OX)}
In the previous section, we saw that $ \mathbf{Set}(\mathcal{O}(X)) $ is a Grothendieck topos.
Here, we will give a concrete description of the topos structure of $ \mathbf{Set}(\mathcal{O}(X)) $.
Most results here (except for some details on the lattice $ \mathcal{P}(A,\alpha) $) are borrowed from \cite{Higgs1984}.
The constructions will be exploited later in this paper.

\begin{Prop*}[Finite limits in $ \mathbf{Set}(\mathcal{O}(X))$] \label{prop:fin-limits-in-Set(OX)}
	\ 
	
	\begin{enumerate}
		\item Let $ (\{\ast\},\top) $ be the $ \mathcal{O}(X) $-valued set with $ \top(\ast,\ast) = 1_X $.
		This yields a terminal object in $ \mathbf{Set}(\mathcal{O}(X)) $.
		
		\item Let $ \{(A_i,\alpha_i)\}_{i \in I} $ be a finite family of $ \mathcal{O}(X) $-valued sets.
		Define a valuation $ \delta $ on $ \prod_i A_i $ by $ \delta(a,a') = \bigwedge_i \alpha_i(a_i,a'_i) $ for $ a = \{a_i\}_{i \in I} $ and $ a' = \{a'_i\}_{i \in I} $.
		Then $ (\prod_i A_i,\delta) $ equipped with the canonical projections $ (\prod_i A_i,\delta) \to (A_i,\alpha_i) $
		is a product of $ \{(A_i,\alpha_i)\}_{i \in I} $ in $ \mathbf{Set}(\mathcal{O}(X)) $.
		
		\item Let $ \varphi, \psi \colon (A,\alpha) \rightrightarrows (B,\beta) $ be morphisms of $ \mathcal{O}(X) $-valued sets.
		Define a valuation $ \delta $ on $ A $ by $ \delta(a,a') = \alpha(a,a') \wedge \bigvee_{b \in B} \varphi(a,b) \wedge \psi(a,b) $.
		Then $ (A,\delta) $ equipped with the canonical morphism $ (A,\delta) \rightarrowtail (A,\alpha) $
		is an equalizer of $ \varphi $ and $ \psi $ in $ \mathbf{Set}(\mathcal{O}(X)) $.
		
		\item Let $ (A,\alpha) \xrightarrow{\varphi} (C,\gamma) \xleftarrow{\psi} (B,\beta) $ be morphisms of $ \mathcal{O}(X) $-valued sets.
		Define a valuation $ \delta $ on $ A \times B $ by 
		\[ \delta((a,b),(a',b')) = \alpha(a,a') \wedge \beta(b,b') \wedge \bigvee_{c \in C} \varphi(a,c) \wedge \psi(b,c). \]
		Then $ (A \times B,\delta) $ equipped with the canonical projections is a pullback of that diagram in $ \mathbf{Set}(\mathcal{O}(X)) $. \qedhere
	\end{enumerate}
\end{Prop*}

The following notion of strict relation is crucial in handling subobjects of an $ \mathcal{O}(X) $-valued set.
In the next section, it will enable us to define the ``forcing values'' of formulas.

\begin{Def}
	Let $ (A,\alpha) $ be an $ \mathcal{O}(X) $-valued set.
	A \emph{strict relation} on $ (A,\alpha) $ is a function $ \sigma \colon A \to \mathcal{O}(X) $ such that
	\[ \forall a,a' \in A,\, \sigma(a) \wedge \alpha(a,a') \leq \sigma(a') \; \text{and}\; \sigma(a) \leq \alpha(a). \]
	Note that a singleton is a strict relation on the same $ \mathcal{O}(X) $-valued set.
\end{Def}

\begin{Prop} \label{prop:strict-relations-vs-subobjects}
	Let $ \mathcal{P}(A,\alpha) $ be the set of strict relations on $ (A,\alpha) $ ordered by
	\[ \sigma \leq \tau \defarrow \forall a\in A,\, \sigma(a) \leq \tau(a). \]
	Then, as ordered sets, $ \mathcal{P}(A,\alpha) $ is isomorphic to the poset $ \Sub(A,\alpha) $ of subobjects of $ (A,\alpha) $.
\end{Prop}
\begin{proof}
	For a strict relation $ \sigma $, we define a map $ \alpha_{\sigma} \colon A \times A \to \mathcal{O}(X) $ by
	\[ \alpha_{\sigma}(a,a') := \sigma(a) \wedge \alpha(a,a') = \sigma(a') \wedge \alpha(a,a'). \]
	Then $ (A,\alpha_{\sigma}) $ is an $ \mathcal{O}(X) $-valued set and the identity map on $ A $ 
	represents a monomorphism $ \iota_{\sigma} \colon (A,\alpha_{\sigma}) \rightarrowtail (A,\alpha) $ by \textbf{Proposition \ref{prop:monic-and-epic-representables}}.
	
	Conversely, for a monomorphism $ \varphi \colon (B,\beta) \rightarrowtail (A,\alpha) $,
	we define a strict relation $ \rho_{\varphi} \colon A \to \mathcal{O}(X) $ by
	\[ \rho_{\varphi}(a) := \bigvee_{b \in B} \varphi(b,a). \]
	Then we can check 
	\begin{itemize}
		\item for any $ \sigma $, $ \rho_{(\iota_{\sigma})} = \sigma $.
		\item for any $ \varphi $, $ \iota_{(\rho_{\varphi})} \simeq \varphi $ as subobjects of $ (A,\alpha) $. \qedhere
	\end{itemize}
\end{proof}

In fact, for an arbitrary morphism $ \varphi $, a strict relation $ \rho_{\varphi} $ can be defined as above,
and the image factorization of $ \varphi $ is given by
\[ \tikz[auto]{
	\node (UL) at (-2,1.5) {$ (B,\beta) $};
	\node (UR) at (2,1.5) {$ (A,\alpha) $};
	\node (DC) at (0,0) {$ (A,\alpha_{(\rho_{\varphi})}) $};
	\draw[->] (UL) to node {$ \varphi $} (UR);
	\draw[->>] (UL) to node[swap] {$ \varphi $} (DC);
	\draw[>->] (DC) to node[swap] {$ \iota_{(\rho_{\varphi})} $} (UR);
}. \]

$ \mathbf{Set}(\mathcal{O}(X)) $ is a topos and, in particular, a Heyting category (\cite[\S A1.4]{Elephant}).
The associated operations on subobject lattices are as follows:

\begin{Prop*}
	The operations on the frame $ \mathcal{P}(A,\alpha) $ are given by
	\begin{gather*}
		1_{\mathcal{P}(A,\alpha)}(a) = \alpha(a), \qquad 0_{\mathcal{P}(A,\alpha)}(a) = 0_X, \\
		(\sigma \wedge \tau)(a) = \sigma(a) \wedge \tau(a), \quad \Bigl ( \bigvee_i \sigma_i \Bigr )(a) = \bigvee_i \sigma_i(a), \\
		(\sigma \imply \tau)(a) = \alpha(a) \wedge (\sigma(a) \imply \tau(a)). \qedhere
	\end{gather*}
\end{Prop*}

\begin{Prop}
	Let $ \varphi \colon (B,\beta) \to (A,\alpha) $ be a morphism.
	Pulling back subobjects along $ \varphi $ defines a frame homomorphism $ \varphi^* \colon \mathcal{P}(A,\alpha) \to \mathcal{P}(B,\beta) $
	such that, for $ \sigma \in \mathcal{P}(A,\alpha) $,
	\[ (\varphi^*\sigma)(b) = \bigvee_{a \in A} \varphi(b,a) \wedge \sigma(a) = \beta(b) \wedge \bigwedge_{a \in A} [\varphi(b,a) \imply \sigma(a)]. \]
\end{Prop}
\begin{proof}
	For the last identity, note that
	\begin{align*}
		\beta(b) \wedge \bigwedge_{a \in A} [\varphi(b,a) \imply \sigma(a)] & = \bigvee_{a \in A} \varphi(b,a) \wedge \bigwedge_{a' \in A} [\varphi(b,a') \imply \sigma(a')] \\
		& \leq \bigvee_{a \in A} \varphi(b,a) \wedge [\varphi(b,a) \imply \sigma(a)] \\
		& \leq \bigvee_{a \in A} \varphi(b,a) \wedge \sigma(a). \qedhere
	\end{align*}
\end{proof}

\begin{Prop*}
	In the same notations as above,
	$ \varphi^* $ has both a left adjoint $ \exists_{\varphi} $ and a right adjoint $ \forall_{\varphi} $: for $ \tau \in \mathcal{P}(B,\beta) $,
	\begin{align*}
		(\exists_{\varphi}\tau)(a) & = \bigvee_{b \in B} \varphi(b,a) \wedge \tau(b), \\
		(\forall_{\varphi}\tau)(a) & = \alpha(a) \wedge \bigwedge_{b \in B} [\varphi(b,a) \imply \tau(b)]. \qedhere
	\end{align*}
\end{Prop*}

Finally, we describe the higher-order structure of $ \mathbf{Set}(\mathcal{O}(X)) $.

\begin{Prop} \label{prop:subobj-classifier-of-Set(OX)}
	Put $ \delta(U,V) = ( U \imply V ) \wedge (V \imply U) $ for $ U,V \in \mathcal{O}(X) $. Then $ (\mathcal{O}(X),\delta) $ is an $ \mathcal{O}(X) $-valued set.
	Let $ t \colon (\{\ast\},\top) \to (\mathcal{O}(X),\delta) $ be the morphism defined by $ t(\ast,U) = U $.
	This yields a subobject classifier of $ \mathbf{Set}(\mathcal{O}(X)) $.
\end{Prop}
\begin{proof}
	Let $ \chi \colon (A,\alpha) \to (\mathcal{O}(X),\delta) $ be a morphism.
	Since $ t $ corresponds to the strict relation $ \id_{\mathcal{O}(X)} $ on $ (\mathcal{O}(X),\delta) $,
	the pullback of $ t $ along $ \chi $ is given by the strict relation $ \sigma(a) = \bigvee_{U} \chi(a,U) \wedge U $.
	Conversely, given a strict relation $ \sigma $ on $ (A,\alpha) $, then $ \sigma $ itself represents a morphism $ \chi(a,U) = \alpha(a) \wedge (U \leftrightarrow \sigma(a)) $.
	These correspondences yield a bijection between $ \mathcal{P}(A,\alpha) $ and $ \Hom((A,\alpha), (\mathcal{O}(X),\delta)) $.
	
	\[ \tikz[auto]{
		\node (UL) at (0,1.5) {$ (A,\alpha_{\sigma}) $};
		\node (DL) at (0,0) {$ (A,\alpha) $};
		\node (UR) at (3,1.5) {$ (\{\ast\},\top) $};
		\node (DR) at (3,0) {$ (\mathcal{O}(X),\delta) $};
		\node at (0.5,1) {$ \lrcorner $};
		\draw[->] (UL) to node {$ ! $} (UR);
		\draw[>->] (UR) to node {$ t $} (DR);
		\draw[>->] (UL) to node[swap] {$ \iota_{\sigma} $} (DL);
		\draw[->] (DL) to node {$ \chi $} (DR);
	} \qedhere \]
\end{proof}

Similarly to $ \mathcal{O}(X) $, $ \mathcal{P}(A,\alpha) $ is not only a frame but also an $ \mathcal{O}(X) $-valued set.

\begin{Prop}
	The power object of $ (A,\alpha) $ is given by $ \mathcal{P}(A,\alpha) $ equipped with the valuation
	\footnote{Note that $ \bar{\alpha} $ does not necessarily coincide with 
		$ \tilde{\alpha} $ on $ \tilde{A} $ in \textbf{Proposition \ref{prop:completion-of-OX-sets}}.
		Indeed, while $ \tilde{\alpha}(\sigma,\tau) \leq \bar{\alpha}(\sigma,\tau) $ holds for any $ \sigma,\tau \in \tilde{A}$,
		the converse inequality does not hold if $ \sigma=\tau=\sigma_a $ for a non-global element $ a \in A $.
	}
	\[ \bar{\alpha}(\sigma,\tau) := \bigwedge_{a \in A} \sigma(a) \leftrightarrow \tau(a). \]
\end{Prop}
\begin{proof}
	We would like to establish the following bijection
	\[ \Hom((B,\beta),\mathcal{P}(A,\alpha)) \simeq \mathcal{P}((B,\beta)\times(A,\alpha)). \]
	For a morphism $ \varphi \colon (B,\beta) \to \mathcal{P}(A,\alpha) $,
	we have a strict relation $ \theta $ on $ (B,\beta)\times(A,\alpha) $ such that
	\[ \theta(b,a) = \bigvee_{\tau \in \mathcal{P}(A,\alpha)} \varphi(b,\tau) \wedge \tau(a). \]
	On the other hand, for any strict relation $ \theta $, we have a morphism
	\[ \varphi(b,\tau) = \beta(b) \wedge \bigwedge_{a \in A} \theta(b,a) \leftrightarrow \tau(a). \]
	These correspondences are mutual inverses.
\end{proof}

\section{Sheaves of Structures and Heyting-Valued Structures}\label{sec:sheaf-of-str-and-H-valued-str}

\subsection{Structures in a Topos} \label{subsec:structures-in-topos}
We will be concerned with categorical semantics in the toposes $ \mathbf{Sh}(X) $ and $ \mathbf{Set}(\mathcal{O}(X)) $.
In this subsection, we take a glance at first-order categorical logic, which originates from \cite{MakReyFOCL}.
The main reference is \cite{Elephant}, in particular, Chapter D1 in volume 2.
For an overview, we refer the reader to Caramello's account \cite{CarBackground},
which is a preliminary version of the first two chapters of her book \cite{CarTST}.
Although many fragments of (possibly infinitary) first-order logic are considered in the context of categorical logic, we restrict our attention to single-sorted intuitionistic logic.
Examples of other fragments include Horn, cartesian, regular, coherent, classical, and geometric logics.

\begin{Def}
	A \emph{(first-order) language} $ \mathcal{L} $ consists of the following data:
	\begin{itemize}
		\item A set $ \mathcal{L}\mhyphen\mathrm{Func} $ of \emph{function symbols}.
		Each function symbol $ f \in \mathcal{L}\mhyphen\mathrm{Func}  $ is associated with a natural number $ n $ (the \emph{arity} of $ f $).
		If $ n=0 $, $ f $ is called a \emph{constant (symbol)}.
		
		\item A set $ \mathcal{L}\mhyphen\mathrm{Rel} $ of \emph{relation symbols}.
		Each relation symbol $ R \in \mathcal{L}\mhyphen\mathrm{Rel}  $ is associated with a natural number $ n $ (the \emph{arity} of $ R $).
		If $ n=0 $, $ R $ is called an \emph{atomic proposition}. \qedhere
	\end{itemize}
\end{Def}

\emph{$ \mathcal{L} $-terms} and \emph{$ \mathcal{L} $-formulas} are defined as usual. We need some conventions.

\begin{Def}
	\ 
	
	\begin{enumerate}
		\item A \emph{context} is a finite list $ \bvec{u} \equiv u_1,\dots,u_n $ of distinct variables.
		If $ n=0 $, it is called the \emph{empty context} and denoted by $ [\,] $.
		
		\item We say that a context $ \bvec{u} $ is \emph{suitable} for an $ \mathcal{L} $-formula $ \varphi $
		when $ \bvec{u} $ contains all the free variables of $ \varphi $.
		A formula $ \varphi $ equipped with a suitable context $ \bvec{u} $
		is called a \emph{formula-in-context} and indicated by $ \varphi(\bvec{u}) $.
		Similarly, \emph{terms-in-context} can be defined. \qedhere
	\end{enumerate}
\end{Def}

For an $ \mathcal{L} $-formula-in-context $ \varphi(\bvec{u},\bvec{v}) $,
we abbreviate, e.g., $ \exists v_1 \cdots \exists v_n \varphi(\bvec{u},\bvec{v}) $
as $ \exists \bvec{v} \varphi(\bvec{u},\bvec{v}) $.
We also abbreviate the $ \mathcal{L} $-formula $ \bigwedge_i u_i = v_i $ as $ \bvec{u}=\bvec{v} $,
where $ \bvec{u},\bvec{v} $ are assumed to have the same length.
A formula is \emph{closed} if it contains no free variables.

We now give categorical semantics in an arbitrary elementary topos $ \mathcal{E} $,
though we will only need the case when $ \mathcal{E} $ is a Grothendieck topos.

\begin{Def}
	Let $\mathcal{L}$ be a language and $\mathcal{E}$ a topos.
	An $ \mathcal{L} $-structure $ \mathcal{M} $ in $ \mathcal{E} $ is given by specifying the following data:
	\begin{itemize}
		\item We have the underlying object $ \dvert{\mathcal{M}} \in \mathcal{E} $
		and denote the $ n $-ary product by $ \dvert{\mathcal{M}}^{n}  $.
		In particular, $ \dvert{\mathcal{M}}^{0}  $ is the terminal object $ 1_{\mathcal{E}}$.
		
		\item To an $ n $-ary function symbol $ f $,
		we assign a morphism $ f^{\mathcal{M}} \colon \dvert{\mathcal{M}}^{n} \to \dvert{\mathcal{M}}$.
		
		\item To an $ n $-ary relation symbol $R $,
		we assign a subobject $R^{\mathcal{M}} $ of $ \dvert{\mathcal{M}}^n $.
	\end{itemize}
	As usual, we will not distinguish $ \mathcal{M} $ and its underlying object $ \dvert{\mathcal{M}} $ in notation.
\end{Def}

Interpretations of $ \mathcal{L} $-terms and $ \mathcal{L} $-formulas
are defined by using internal operations in $ \mathcal{E} $.

\begin{Def}[Interpretations of terms]
	Let $ \mathcal{M} $ be an $ \mathcal{L} $-structure in a topos $ \mathcal{E} $.
	For an $\mathcal{L}$-term-in-context $ t(\bvec{u}) $,
	we define the interpretation $ t^{\mathcal{M}} \colon \mathcal{M}^n \to \mathcal{M} $ inductively.
	\begin{itemize}
		\item If $ t $ is a variable $ u_i $,
		then $ t^{\mathcal{M}} $ is the $ i $-th product projection $ \pi_i \colon \mathcal{M}^n \to \mathcal{M} $.
		
		\item If interpretations of $ \mathcal{L} $-terms $ t_i(\bvec{u}) $ and $ s(\bvec{v}) $ are given,
		the term $ s(t_1(\bvec{u}),\ldots,t_m(\bvec{u})) $ is interpreted as the composite of the following morphisms:
		\[ \mathcal{M}^n \xrightarrow{\langle t_1^{\mathcal{M}} ,\ldots , t_m^{\mathcal{M}} \rangle}
		\mathcal{M}^m \xrightarrow{s^{\mathcal{M}}} \mathcal{M}, \]
		where $ \langle t_1^{\mathcal{M}} ,\ldots , t_m^{\mathcal{M}}  \rangle $ is the morphism
		obtained from the morphisms $ t_i^{\mathcal{M}} $ by using the universal property of the product $ \mathcal{M}^m $. \qedhere
	\end{itemize}
\end{Def}

\begin{Def}[Interpretations of formulas]
	Let $ \mathcal{M} $ be an $ \mathcal{L} $-structure in a topos $ \mathcal{E} $.
	For an $\mathcal{L}$-formula-in-context $ \varphi(\bvec{u}) $,
	we define the interpretation $ \dbracket{\bvec{u}.\,\varphi}_{\mathcal{M}} $ as a subobject of $ \mathcal{M}^n $ inductively.
	(We drop the subscript $ \mathcal{M} $ if no confusion arises.)
	\begin{itemize}
		\item If $ \varphi \equiv (s(\bvec{u})=t(\bvec{u}))$ where $ s,t $ are terms,
		then $ \dbracket{\bvec{u}.\,\varphi} $ is defined to be the equalizer of
		\[ \tikz[auto]{
			\node (L) at (0,0) {$ \mathcal{M}^n $};
			\node (R) at (4,0) {$ \mathcal{M} $};
			\draw[transform canvas={yshift=4pt},->] (L) to node {$s^{\mathcal{M}}$} (R);
			\draw[transform canvas={yshift=-4pt},->] (L) to node[swap] {$t^{\mathcal{M}}$} (R);
		}.
		\]
		
		\item If $\varphi \equiv R(t_1(\bvec{u}), \ldots ,t_m(\bvec{u}))$, then $ \dbracket{\bvec{u}.\,\varphi} $ is the pullback
		\[ \tikz[auto]{
			\node (UL) at (0,1.5) {$ \dbracket{\bvec{u}.\,R(t_1, \ldots ,t_m)} $};
			\node (DL) at (0,0) {$ \mathcal{M}^n  $};
			\node (UR) at (5,1.5) {$ R^{\mathcal{M}} $};
			\node (DR) at (5,0) {$ \mathcal{M}^m $};
			\draw[>->] (UL) to (DL);
			\draw[->] (UL) to (UR);
			\draw[->] (DL) to node {$ \langle t_1^{\mathcal{M}},\ldots,t_m^{\mathcal{M}}\rangle $} (DR);
			\draw[>->] (UR) to (DR);
		}. \]
		
		\item If $ \varphi \equiv \top , \bot, \psi \land \theta $, $ \psi \lor \theta $, $ \psi \limply \theta $ or $ \lnot \psi $,
		then $ \dbracket{\bvec{u}.\,\varphi} $ is defined as expected by using the Heyting operations on $\Sub(\mathcal{M}^n)$.
		
		\item If $\varphi \equiv \exists v \psi(\bvec{u},v)$,
		then $ \dbracket{\bvec{u}.\,\varphi} $ is the image as in the following diagram:
		\[ \tikz[auto]{
			\node (UL) at (0,1.5) {$ \dbracket{\bvec{u},v.\,\psi} $};
			\node (DL) at (1,0) {$ \dbracket{\bvec{u}.\, \exists v \psi } $};
			\node (UR) at (3,1.5) {$ \mathcal{M}^n \times \mathcal{M}  $};
			\node (DR) at (4,0) {$ \mathcal{M}^n $};
			\draw[->>] (UL) to (DL);
			\draw[>->] (UL) to (UR);
			\draw[>->] (DL) to (DR);
			\draw[->] (UR) to node {$ \pi $} (DR);
		}, \]
		where $ \pi $ is the projection onto $ \mathcal{M}^n $.
		
		\item If $\varphi \equiv \forall v \psi(\bvec{u},v)$,
		then $ \dbracket{\bvec{u}.\,\varphi} := \forall_{\pi} \dbracket{\bvec{u},v.\,\psi} $,
		where $ \forall_{\pi} \colon \Sub(\mathcal{M}^n \times \mathcal{M}) \to \Sub(\mathcal{M}^n) $ is the right adjoint of $ \pi^* $. \qedhere
	\end{itemize}
\end{Def}

In this paper, we will not consider the notions of models of a theory in a topos nor homomorphisms between structures.

\subsection{Sheaves of Structures and Heyting-Valued Structures} \label{subsec:sheaf-of-str-and-H-valued-str}

We now investigate the relationship between structures in $ \mathbf{Sh}(X) $ and those in $ \mathbf{Set}(\mathcal{O}(X)) $.
We first consider the case of sheaves on a \textit{topological space} $ X $.
Let $ \mathbf{LH} $ be the category of topological spaces and local homeomorphisms between them.
Recall that the slice category $ \mathbf{LH}/X $ is categorically equivalent to $ \mathbf{Sh}(X) $.
Comer \cite{Comer1974}, Ellerman \cite{Ell1974}, and Macintyre \cite{Mac1973} used the following notion to obtain model-theoretic results:

\begin{Def} \label{def:sheaf-of-str-on-space}
	A \emph{sheaf of $ \mathcal{L} $-structures (on $ X $)} is a tuple 
	\[ \left (X,E,\pi,\Set{f^{E_x} \smid x \in X,\, f \in \mathcal{L}\mhyphen\mathrm{Func}},
	\Set{R^{E_x} \smid x \in X,\, R \in \mathcal{L}\mhyphen\mathrm{Rel}} \right ) \]
	such that
	\begin{itemize}
		\item $ \pi \colon E \to X $ is a local homeomorphism of topological spaces,
		\item each stalk $ E_x $ equipped with $ \{f^{E_x}\}_f $ and $ \{R^{E_x}\}_R $ is an $ \mathcal{L} $-structure, and
		\begin{itemize}
			\item for each function symbol $ f $, the map $ \coprod_x (E_x)^n \to E $ induced by $ \{f^{E_x}\}_x $ is continuous,
			\item for each relation symbol $ R $, the subset $ \coprod_x R^{E_x} \subseteq \coprod_x (E_x)^n $ is open,
		\end{itemize}
		where $ \coprod_x (E_x)^n $ is seen as a subspace of the product space $ E^n $ for $ n>0 $, and $ \coprod_x (E_x)^0 \simeq X $. \qedhere
	\end{itemize}
\end{Def}

Sheaves of abelian groups or of rings in geometry are, of course, such examples for suitable languages.
We will meet other examples which give model-theoretic constructions of (usual $ \mathbf{Set} $-valued) structures in the next section.

\begin{Lem}
	A sheaf of $ \mathcal{L} $-structures is identified with an $ \mathcal{L} $-structure in $ \mathbf{LH}/X $.
\end{Lem}
\begin{proof}
	Notice the following facts:
	\begin{itemize}
		\item $ \coprod_x (E_x)^n $ is a fiber product $ E \times_X \dots \times_X E $, i.e., a product in $ \mathbf{LH}/X $.
		\item Any monomorphism in $ \mathbf{LH}/X $ is an open embedding. \qedhere
	\end{itemize}
\end{proof}

Hereafter, we fix a locale $ X $.
We will also say ``sheaves of structures on $ X $'' to mean structures in $ \mathbf{Sh}(X) $.
When we mention a subsheaf $ Q $ of a sheaf $ P $, each $ Q(U) $ is assumed to be a subset of $ P(U) $.

Before we define Heyting-valued structures, let us introduce space-saving notations.
If $ (\mathcal{M},\delta) $ is an $ \mathcal{O}(X) $-valued set,
then the $ n $-th power $ \mathcal{M}^n $ is canonically equipped with the valuation as in \textbf{Proposition \ref{prop:fin-limits-in-Set(OX)}}(2).
For tuples $ \bvec{a},\bvec{a'} \in \mathcal{M}^n $, we simply write $ \delta(\bvec{a},\bvec{a'}) $ (resp.\ $ \delta(\bvec{a}) $)
for $ \bigwedge_i \delta(a_i,a'_i) $ (resp.\ $ \bigwedge_i \delta(a_i) $).
These notations are useful, but, in the case $ n=2 $, we will always write $ \delta(a) \wedge \delta(b) $ for $ \delta((a,b),(a,b)) $ 
to avoid confusion between $ \delta(a,b) $ and $ \delta((a,b)) $. 

\begin{Def}
	An \emph{$ \mathcal{O}(X) $-valued $ \mathcal{L} $-structure} is an $ \mathcal{L} $-structure in the topos $ \mathbf{Set}(\mathcal{O}(X)) $,
	i.e., it consists of the following data:
	\begin{itemize}
		\item an $ \mathcal{O}(X) $-valued set $ (\mathcal{M},\delta) $,
		\item for each function symbol $ f $, a morphism $ f^{\mathcal{M}} \colon (\mathcal{M}^n,\delta) \to (\mathcal{M},\delta) $,
		\item for each relation symbol $ R $, a strict relation $ R^{\mathcal{M}} \colon \mathcal{M}^n \to \mathcal{O}(X)$.
	\end{itemize}
	The interpretation of equality is the diagonal $ (\mathcal{M},\delta) \rightarrowtail (\mathcal{M}^2,\delta) $,
	which corresponds to the strict relation $ (a,b) \mapsto \delta(a,b) $ on $ (\mathcal{M}^2,\delta) $ under the bijection in 
	\textbf{Proposition \ref{prop:strict-relations-vs-subobjects}}.
\end{Def}

Fourman \& Scott \cite[p.~365]{FourSco1979} defined Heyting-valued structures in a slightly less general form.
Structures in the topos associated with a tripos are discussed in \cite[p.~69]{vOosten2008}.

Recall the construction of $ \Theta \colon \mathbf{Set}^{\mathcal{O}(X)^{\op}} \to \mathbf{Set}(\mathcal{O}(X)) $ at the beginning of \S\ref{subsec:sheaves-vs-OX-sets}.
We can obtain $ \mathcal{O}(X) $-valued structures from sheaves of structures on $ X $ by applying $ \Theta $.

\begin{Lem}
	Let $ P $ be a \textit{presheaf} on $ X $.
	Then, the $ n $-ary product $ \Theta(P)^n $ is isomorphic to $ \Theta(P^n) $ as $ \mathcal{O}(X) $-valued sets.
	Indeed, the canonical map $ h \colon \coprod_U (PU)^n \to (\coprod_U PU)^n $ represents an isomorphism $ \iota \colon \Theta(P^n) \isoarrow \Theta(P)^n $
	so that $ \iota(\bvec{b},\bvec{a}) = \iota^{-1}(\bvec{a},\bvec{b}) = \delta_P(h(\bvec{b}),\bvec{a}) $.
	
	Moreover, for a strict relation $ \sigma $ on $ \Theta(P^n) $, the corresponding strict relation $ \tau $ on $ \Theta(P)^n $ is given by
	\[ \tau(\bvec{a}) = \bigvee_{\bvec{b} \in \Theta(P^n)} \sigma(\bvec{b}) \wedge \delta_{P}(h(\bvec{b}),\bvec{a}). \]
\end{Lem}
\begin{proof}
	For the case when $ P $ is a sheaf, this lemma is an immediate consequence of the fact that 
	$ \Theta \colon \mathbf{Sh}(X) \to \mathbf{Set}(\mathcal{O}(X)) $ is part of an equivalence of categories.
	We can also see directly that $ \Theta \colon \mathbf{Set}^{\mathcal{O}(X)^{\op}} \to \mathbf{Set}(\mathcal{O}(X)) $ preserves finite products
	by using \textbf{Corollary \ref{cor:isomorphisms-induced-by-maps}}.
	
	For a given $ \sigma $, by the proof of \textbf{Proposition \ref{prop:strict-relations-vs-subobjects}},
	the corresponding subobject of $ \Theta(P^n) $ is $ (\coprod_U (PU)^n,(\delta_{P^n})_{\sigma}) $ 
	with $ (\delta_{P^n})_{\sigma}(\bvec{b},\bvec{b'}) = \sigma(\bvec{b}) \wedge \delta_{P^n}(\bvec{b},\bvec{b'})$.
	Hence, $ \tau $ is given by
	\[ \tau(\bvec{a}) 
	= \bigvee_{\bvec{b},\bvec{b'}\in\Theta(P^n)} (\delta_{P^n})_{\sigma}(\bvec{b},\bvec{b'}) \wedge \iota(\bvec{b'},\bvec{a})
	= \bigvee_{\bvec{b} \in \Theta(P^n)} \sigma(\bvec{b}) \wedge \delta_{P}(h(\bvec{b}),\bvec{a}). \]
	We remark that $ \tau(h(\bvec{b})) = \sigma(\bvec{b}) $ for any $ \bvec{b} \in \Theta(P^n) $ and therefore $ \tau $ is an extension of $ \sigma $ along $ h $.
\end{proof}

\begin{Prop} \label{prop:Heyt-str-obtained-from-sheaf-of-str}
	If $ P $ is a sheaf of $ \mathcal{L} $-structures on $ X $, then we can make the $ \mathcal{O}(X) $-valued set $ \Theta(P) $
	into an $ \mathcal{O}(X) $-valued $ \mathcal{L} $-structure canonically.
\end{Prop}
\begin{proof}
	Here we describe in detail the corresponding $ \mathcal{O}(X) $-valued $ \mathcal{L} $-structure $ \mathcal{M} $.
	For each function $ f $, we have a morphism $ f^P \colon P^n \to P $ of sheaves. This induces a morphism $ \Theta(f^P) \colon \Theta(P^n) \to \Theta(P) $.
	By the previous lemma, we obtain a morphism $ f^{\mathcal{M}} \colon \Theta(P)^n \to \Theta(P) $, which can be computed as
	\[ f^{\mathcal{M}}(\bvec{a},a') = \bigvee_{\bvec{b} \in \Theta(P^n)} \iota^{-1}(\bvec{a},\bvec{b}) \wedge \Theta(f^P)(\bvec{b},a') 
	= \delta_P(f^P(\bvec{a}|_{\delta_P(\bvec{a})}),a')\]
	for $ \bvec{a} \in \Theta(P)^n $ and $ a' \in \Theta(P) $,
	where $ \bvec{a}|_{\delta_P(\bvec{a})} = (a_1|_{\delta_P(\bvec{a})},\dots,a_n|_{\delta_P(\bvec{a})}) $.
	In particular, $ f^{\mathcal{M}} $ is represented by the map $ k \colon \mathcal{M}^n \to \mathcal{M} $ with $ k(\bvec{a}) = f^P(\bvec{a}|_{\delta_P(\bvec{a})}) $.
	
	For each relation $ R $, we have a subsheaf $ R^P \rightarrowtail P^n $. This induces a subobject $ \Theta(R^P) \rightarrowtail \Theta(P^n) $,
	which corresponds to the following strict relation $ \sigma \colon \coprod_U (PU)^n \to \mathcal{O}(X) $: for $ \bvec{b} \in (PU)^n $,
	\[ \sigma(\bvec{b}) = \bigvee_{\bvec{b'}\in \Theta(R^P)} \delta_{P^n}(\bvec{b},\bvec{b'})=\bigvee \Set{W \leq U \smid \bvec{b}|_W \in R^P(W)}.\]
	By the previous lemma, we obtain a subobject of $ \Theta(P)^n $,
	which corresponds to the following strict relation $ R^{\mathcal{M}} \colon \mathcal{M}^n \to \mathcal{O}(X) $: 
	for $ \bvec{a} \in PU_1 \times \dots \times PU_n $,
	\begin{align*}
		R^{\mathcal{M}}(\bvec{a}) 
		& = \bigvee_{\bvec{b} \in \Theta(P^n)} \iota^{-1}(\bvec{a},\bvec{b}) \wedge \sigma(\bvec{b}) \\
		& = \bigvee_{\bvec{b} \in \Theta(P^n)} \left [ \delta_{P}(h(\bvec{b}),\bvec{a}) \wedge \bigvee \Set{W \leq \delta_{P^n}(\bvec{b}) \smid \bvec{b}|_W \in R^P(W)} \right ]\\
		& = \bigvee  \Set{W \leq U_1 \wedge \dots \wedge U_n \smid \bvec{a}|_W \in R^P(W)}.
	\end{align*}
	
	Notice that the subobject $ \Theta(P) \rightarrowtail \Theta(P)^2 $ obtained from the diagonal $ P \rightarrowtail P^2 $ is 
	the same as the one determined by the strict relation $ \delta_P $ on $ \Theta(P) $.
\end{proof}

We could describe the converse construction (from Heyting-valued structures to sheaves of structures).
This involves a complicated use of completion of Heyting-valued sets, and we do not find such details to be useful for the purpose of this paper.
So we skip it at this point.

In the context of set theory (e.g.\ \cite{PieVia2020}), there are examples of Heyting-valued structures which do not come from sheaves.

\begin{Rmk}
	Some authors have applied (set-theoretic) Boolean-valued universes to mathematics (cf.\ \cite{KusKutaBVA} and the references therein).
	From the viewpoint of \textbf{Remark \ref{rmk:relation-to-set-theory}}, these works complement our understanding of Heyting-valued structures.
\end{Rmk}

\subsection{Forcing Values of Formulas}  \label{subsec:forcing-val}
Forcing values of formulas derive from Boolean-valued set theory.
Here we first define them categorically and then observe that our definition is compatible with the usual one.
The categorical description seems to be folklore but has not appeared in an explicit form elsewhere.
For an $ \mathcal{O}(X) $-valued $ \mathcal{L} $-structure $ (\mathcal{M},\delta) $,
we write $ \mathcal{L}_{\mathcal{M}} $ for the language extending $ \mathcal{L} $ by adding a new constant symbol for each element of $ \mathcal{M} $.

\begin{Def}
	For an $ \mathcal{L} $-formula-in-context $ \varphi(\bvec{u}) $,
	the strict relation $ \dVert{\varphi(-)}^{\mathcal{M}} $ on $ (\mathcal{M},\delta)^n $ is defined to be 
	the one corresponding to the subobject $ \dbracket{\bvec{u}.\,\varphi}_{(\mathcal{M},\delta)} \rightarrowtail (\mathcal{M},\delta)^n $.
	For $ \bvec{a} \in \mathcal{M}^n $, $ \dVert{\varphi(\bvec{a})}^{\mathcal{M}} $ is called 
	the \emph{forcing value} of the closed $ \mathcal{L}_{\mathcal{M}} $-formula $ \varphi(\bvec{a}) $.
	We drop the superscript $ \mathcal{M} $ if no confusion arises.
\end{Def}

Since the strict relation $ \bvec{a} \mapsto \delta(\bvec{a}) $ is the greatest element in $ \mathcal{P}(\mathcal{M}^n,\delta) $,
$ \dVert{\varphi(\bvec{a})}^{\mathcal{M}} \leq \delta(\bvec{a}) $ always holds.
Using the results in \S\ref{subsec:topos-structure-of-Set(OX)}, we can calculate the forcing values inductively.

\begin{Prop*} \label{prop:inductive-calc-of-forcing-values}
	\begin{align*}
		\dVert{R(t_1(\bvec{a}), \ldots ,t_m(\bvec{a}))}^{\mathcal{M}} 
		& = \bigvee_{\bvec{b} \in \mathcal{M}^m} \langle t_1^{\mathcal{M}}, \dots, t_m^{\mathcal{M}} \rangle(\bvec{a},\bvec{b}) \wedge R^{\mathcal{M}}(\bvec{b}), \\
		\dVert{s(\bvec{a})=t(\bvec{a})}^{\mathcal{M}} 
		& = \bigvee_{b,c \in \mathcal{M}} \langle s^{\mathcal{M}},t^{\mathcal{M}} \rangle(\bvec{a},(b,c)) \wedge \delta(b,c), \\
		\dVert{\varphi(\bvec{a}) \land \psi(\bvec{a})}^{\mathcal{M}} 
		& = \dVert{\varphi(\bvec{a})}^{\mathcal{M}} \wedge  \dVert{\psi(\bvec{a})}^{\mathcal{M}}, \\
		\dVert{\varphi(\bvec{a}) \lor \psi(\bvec{a})}^{\mathcal{M}} 
		& = \dVert{\varphi(\bvec{a})}^{\mathcal{M}} \vee \dVert{\psi(\bvec{a})}^{\mathcal{M}}, \\
		\dVert{\varphi(\bvec{a}) \limply \psi(\bvec{a})}^{\mathcal{M}} 
		& = \delta(\bvec{a}) \wedge \left [ \dVert{\varphi(\bvec{a})}^{\mathcal{M}} \imply \dVert{\psi(\bvec{a})}^{\mathcal{M}} \right ], \\
		\dVert{\exists v \varphi(\bvec{a},v)}^{\mathcal{M}} 
		& = \bigvee_{b \in \mathcal{M}} \dVert{\varphi(\bvec{a},b)}^{\mathcal{M}} , \\
		\dVert{\forall v \varphi(\bvec{a},v)}^{\mathcal{M}} 
		& = \delta(\bvec{a}) \wedge \bigwedge_{b \in \mathcal{M}} \left [ \delta(b) \imply \dVert{\varphi(\bvec{a},b)}^{\mathcal{M}} \right ]. \qedhere
	\end{align*}
\end{Prop*}
\begin{Rmk}
	If a formula $ \varphi $ has a suitable context $ \bvec{u} $ and $ v $ is a variable distinct from $ \bvec{u} $,
	we have to distinguish the formulas-in-context $ \varphi(\bvec{u}) $ and $ \varphi(\bvec{u},v) $.
	Indeed, the forcing values $ \dVert{\varphi(\bvec{a})} $ and $ \dVert{\varphi(\bvec{a},b)} $ can be different and
	\[ \dVert{\varphi(\bvec{a},b)} = \dVert{\varphi(\bvec{a})} \wedge \delta(b). \qedhere \]
\end{Rmk}
This description of forcing values is compatible with those in \cite[Definition 5.13]{FourSco1979}, \cite[Definition 13.6.6]{TvD} and \cite[p.~70]{vOosten2008}.
The soundness and completeness theorems for Heyting-valued semantics are usually formulated 
with respect to intuitionistic predicate logic \textit{with existence predicate} (for short, \textbf{IQCE}) as in \cite[\S2.2, \S13.6]{TvD}.
However, we will only need soundness of the following form:

\begin{Lem} \label{lem:soundness-for-forcing-values}
	If the sentence $ \forall \bvec{u} [\varphi(\bvec{u}) \limply \psi(\bvec{u})] $ is intuitionistically valid,
	then $ \dVert{\varphi(\bvec{a})}^{\mathcal{M}} \leq \dVert{\psi(\bvec{a})}^{\mathcal{M}} \leq \delta(\bvec{a})$ holds for any $ \bvec{a} \in \mathcal{M}^n $.
\end{Lem}
\begin{proof}
	The assumption implies $ \dbracket{\bvec{u}.\,\varphi} \leq \dbracket{\bvec{u}.\,\psi} $ as subobjects of $ (\mathcal{M},\delta)^n $.
	Therefore, the conclusion holds by the definition of forcing values.
\end{proof}

Let $ P $ be a sheaf of $ \mathcal{L} $-structures and $ \Theta(P)=(\mathcal{M},\delta) $ 
be the $ \mathcal{O}(X) $-valued $ \mathcal{L} $-structure obtained from \textbf{Proposition \ref{prop:Heyt-str-obtained-from-sheaf-of-str}}.
We can see
\begin{enumerate}
	\item For any $ \mathcal{L} $-term $ t(\bvec{u}) $, the morphism $ t^{\mathcal{M}} $ is represented by 
	the map $ \mathcal{M}^n \ni \bvec{a} \mapsto t^P(\bvec{a}|_{\delta(\bvec{a})}) \in \mathcal{M} $
	where $ t^P \colon P^n \to P $ is the interpretation of $ t $ by $ P $.
	
	\item For any atomic $ \mathcal{L} $-formula $ R(t_1(\bvec{u}),\dots,t_m(\bvec{u})) $ and $ \bvec{a} \in \mathcal{M}^n $,
	\begin{align*}
		& \hphantom{=}\; \dVert{R(t_1(\bvec{a}), \ldots ,t_m(\bvec{a}))}^{\mathcal{M}} \\
		& = R^{\mathcal{M}}( t_1^{P}(\bvec{a}|_{\delta(\bvec{a})}),\dots,t_m^{P}(\bvec{a}|_{\delta(\bvec{a})}) ) \\
		& = \bigvee \Set{
			W \leq \delta(\bvec{a}) \smid ( t_1^{P}(\bvec{a}|_{W}),\dots,t_m^{P}(\bvec{a}|_{W}) ) \in R^P(W)
		}.
	\end{align*}
	Similarly for the formula $ s(\bvec{u}) = t(\bvec{u}) $.
\end{enumerate}

More generally, the forcing value $ \dVert{\varphi(-)} $ for $ \Theta(P) $ can be described in terms of the subsheaf $ \dbracket{\bvec{u}.\,\varphi} $ of $ P^n $.
Let $ \Omega $ be the sheaf $ U \mapsto \Omega(U)=(U){\downarrow} $.
This is a subobject classifier in $ \mathbf{Sh}(X) $, and
we thus obtain the characteristic morphism $ \chi \colon P^n \to \Omega $ by the universality of the subobject classifier:
\[ \tikz[auto,baseline=0.75cm]{
	\node (UL) at (0,1.5) {$ \dbracket{\bvec{u}.\,\varphi} $};
	\node at (0.4,1.1) {$ \lrcorner $};
	\node (UR) at (1.5,1.5) {$ 1 $};
	\node (DL) at (0,0) {$ P^n $};
	\node (DR) at (1.5,0) {$ \Omega $};
	\draw[>->] (UL) to (DL);
	\draw[->] (UL) to node {$ ! $} (UR);
	\draw[->] (DL) to node {$ \chi $} (DR);
	\draw[->] (UR) to node {$ \mathrm{true} $} (DR);
} \qquad 
\chi_U(\bvec{a}) = \bigvee\Set{W \leq U \smid \bvec{a}|_W \in \dbracket{\bvec{u}.\,\varphi}(W)}. 
\]

Using \textbf{Proposition \ref{prop:subobj-classifier-of-Set(OX)}} and 
the fact that $ \Theta(\Omega) $ and $ (\mathcal{O}(X),\delta) $ in that proposition are canonically isomorphic,
we can verify the following:

\begin{Prop*}[definable subsheaves and forcing values]
	In the above notation, $ \chi_U(\bvec{a}) = \dVert{\varphi(\bvec{a})}^{\mathcal{M}} $ for any $ \bvec{a} \in P^n U $.
	We will denote $ \chi $ by $ \dVert{\varphi(-)}^{P} $ and its component $ \chi_U $ by $ \dVert{\varphi(-)}^{P}_U $.
\end{Prop*}

Let $ \mathbf{y} \colon \mathcal{O}(X) \to \mathbf{Set}^{\mathcal{O}(X)^{\op}} $ be the Yoneda embedding,
and $ \mathbf{a} \colon \mathbf{Set}^{\mathcal{O}(X)^{\op}} \to \mathbf{Sh}(X) $ the associated sheaf functor.
We write $ \bvec{a} \colon \mathbf{ay}U \to P^n $ for the morphism corresponding to $ \bvec{a} \in P^n U $ under the bijection
\[ P^n U \simeq \Hom_{\mathbf{Set}^{\mathcal{O}(X)^{\op}}}(\mathbf{y}U,P^n) \simeq \Hom_{\mathbf{Sh}(X)}(\mathbf{ay}U,P^n). \]
In terms of forcing values, the sheaf semantics in $ \mathbf{Sh}(X) $ (cf.~\cite[\S VI.7]{SGL}) has a simple description:
\begin{align*}
	& \hphantom{\iff}\;\; U \Vdash_P \varphi(\bvec{a}) \\
	& \defarrow \text{the morphism $ \bvec{a} \colon \mathbf{ay}U \to P^n $ factors through the subsheaf $ \dbracket{\bvec{u}.\,\varphi} \rightarrowtail P^n $,} \\
	& \iff \dVert{\varphi(-)}^P \circ \bvec{a} = \mathrm{true} \circ {!}, \\
	& \iff \dVert{\varphi(\bvec{a})}^P_U = U.
\end{align*}
This is the reason why we use the term ``forcing values'' similarly as in \cite{Ell1974}.
Using the above description, we can show the properties of forcing relation \cite[Theorem VI.7.1]{SGL} for the usual site on $ \mathcal{O}(X) $.

\section{Filter-Quotients of Heyting-Valued Structures and {\L}o\'{s}'s Theorem} \label{sec:filt-quot-and-Los-thm}
As we promised after \textbf{Definition \ref{def:sheaf-of-str-on-space}},
we will observe that sheaves of structures give some constructions in model theory.
These constructions can be generalized to constructions for Heyting-valued structures,
and they provide an adequate setup to state our {\L}o\'{s}-type theorem.

\subsection{Model-Theoretic Constructions via Sheaves of Structures}

\begin{Def} \label{def:sections-and-quotients-of-sheaves}
	Let $ P $ be a sheaf of $ \mathcal{L} $-structures on a locale $ X $.
	\begin{enumerate}[label=(\arabic*)]
		\item We make the set $ P(U) $ for a fixed $ U $ into an $ \mathcal{L} $-structure as follows:
		\[ f^{P(U)}(\bvec{a}):=(f^P)_U(\bvec{a}), \;\text{and}\; P(U) \models R(\bvec{a}) \defarrow \bvec{a} \in R^P(U) \subseteq P(U)^m. \]
		
		\item For a filter $ \mathfrak{f} $ on $ \mathcal{O}(X) $, the colimit $ P/\mathfrak{f} :=\rlim_{U \in \mathfrak{f}} P(U) $
		is the quotient of $ \coprod_{U \in \mathfrak{f}} P(U) $ by the following equivalence relation: for $ U,V \in \mathfrak{f} $ and $ a \in P(U) , b \in P(V)$,
		\[ (U,a) \sim (V,b) \defarrow \exists W \in \mathfrak{f},\, W \leq U \wedge V \;\text{and}\; a|_W=b|_W. \]
		We often write $ [\bvec{a}]_{\mathfrak{f}} $ for a tuple $ ([a_1]_{\mathfrak{f}},\dots,[a_n]_{\mathfrak{f}}) $ of equivalence classes.
		Let $ \delta $ be the valuation of $ \Theta(P) $.
		We make $ P/\mathfrak{f} $ into an $ \mathcal{L} $-structure as follows:
		\begin{align*}
			f^{P/\mathfrak{f}}([\bvec{a}]_{\mathfrak{f}}) & := [f^P(a_1|_{\delta(\bvec{a})},\dots,a_n|_{\delta(\bvec{a})})]_{\mathfrak{f}}, \\
			P/\mathfrak{f} \models R([\bvec{a}]_{\mathfrak{f}}) & \defarrow \exists W \in \mathfrak{f},\, \bvec{a}|_W \in R^P(W), \\
			&\iff \exists W \in \mathfrak{f},\, P(W) \models R(\bvec{a}|_W).
		\end{align*}
		
		In particular, if $ X $ is a topological space and $ x \in X $,
		each stalk $ P_x $ is the quotient $ P/\mathfrak{n}_x $ by the filter $ \mathfrak{n}_x $ of open neighborhoods of $ x $. \qedhere
	\end{enumerate}
\end{Def}

\begin{Expl}[Products] \label{expl:products}
	Let $ X $ be a set. 
	Given an $ X $-indexed family $ \{ \mathcal{M}_x \}_{x \in X} $ of $ \mathcal{L} $-structures,
	the product $ \mathcal{N} := \prod_{x \in X} \mathcal{M}_x $ is an $ \mathcal{L} $-structure such that,
	for any elements $ a^i= \{ a^i_x \}_{x \in X} $,
	\begin{gather*}
		f^{\mathcal{N}}(a^1,\dots,a^n) := \left \{ f^{\mathcal{M}_x}(a^1_x,\dots,a^n_x) \right \}_{x \in X}, \\
		\mathcal{N} \models R(a^1,\dots,a^n) \defarrow \forall x \in X,\, \mathcal{M}_x \models R(a^1_x,\dots,a^n_x). 
	\end{gather*}
	Giving an $ X $-indexed family of $ \mathcal{L} $-structures 
	is the same as giving a sheaf of $ \mathcal{L} $-structures on the discrete space $ X $.
	Let $ P $ be the sheaf corresponding to the local homeomorphism $ \coprod_{x \in X} \mathcal{M}_x \to X $ given by the canonical projection.
	Then, the $ \mathcal{L} $-structure $ P(X) $ of global sections is the same as $ \mathcal{N} $.
	
	Notice that, by induction based on \textbf{Proposition \ref{prop:inductive-calc-of-forcing-values}},
	\[ \dVert{\varphi(a^1,\dots,a^n)}^{\Theta(P)} =  \Set{ x \in X \smid \mathcal{M}_x \models \varphi(a^1_x,\dots,a^n_x) } \]
	holds for any formula $ \varphi $ and $ a^1,\dots,a^n \in \mathcal{N} $.
\end{Expl}

\begin{Expl}[Ultraproducts]
	Let $ \mathfrak{u} $ be an ultrafilter over a set $ X $.
	In the same notation as the previous example,
	the ultraproduct $ \prod_{x} \mathcal{M}_x/\mathfrak{u} $ is the quotient of $ \prod_{x} \mathcal{M}_x $ by the equivalence relation
	\[ a \sim b \defarrow \Set{ x \in X \smid a_x = b_x } \in \mathfrak{u} \]
	equipped with canonical interpretations of $ \mathcal{L} $, e.g.,
	\[ \prod_{x} \mathcal{M}_x/\mathfrak{u} \models R([a^1]_{\mathfrak{u}},\dots,[a^n]_{\mathfrak{u}}) \defarrow 
	\Set{ x \in X \smid \mathcal{M}_x \models R(a^1_x,\dots,a^n_x) } \in \mathfrak{u}. \]
	If each $ \mathcal{M}_x $ is non-empty,
	$ \prod_{x} \mathcal{M}_x/\mathfrak{u} $ can be described as a filter-quotient of the sheaf $ P $ corresponding to $ \coprod_{x \in X} \mathcal{M}_x \to X $.
	Since $ P(U) = \prod_{x \in U} \mathcal{M}_x $ and each local section can be extended to a global section by non-emptiness, we have
	\[ \prod_{x} \mathcal{M}_x/\mathfrak{u} \simeq \rlim_{U \in \mathfrak{u}} P(U) = P/\mathfrak{u} .\]
	Thus,  it is reasonable to regard $ P/\mathfrak{u} $ as a ``generalized'' ultraproduct for any $ \mathfrak{u} $ (cf.~\S\ref{subsubsec:Ellerman's-viewpoint}).
	Notice that we need the axiom of choice to extend local sections to global ones,
	but we do not need AC if $ \mathcal{L} $ contains a constant symbol.
\end{Expl}

\begin{Expl}[Bounded Boolean Powers]
	Let $ B $ be a Boolean algebra and $ \mathcal{M} $ be an $ \mathcal{L} $-structure.
	We then have the sheaf $ P $ on the Stone space $ X $ dual to $ B $ determined by
	\[ P(U) := \Set{s \colon U \to \mathcal{M} \smid \text{locally constant map}}. \]
	This becomes a sheaf of $ \mathcal{L} $-structures, and
	$ \mathcal{M}[B]_{\omega}:=P(X) $ is said to be the \emph{bounded Boolean power} of $ \mathcal{M} $ (cf.~\cite[\S9.7]{HodgesMT}).
\end{Expl}

\begin{Expl}[Bounded Boolean Ultrapowers] \label{expl:bdd-boolean-ultrapower}
	In the same notation as the previous example, for any $ s,t \in \mathcal{M}[B]_{\omega}$, the subsets
	\begin{align*}
		\dVert{ R(s_1,\dots,s_n) } & = \Set{ \mathfrak{v} \in X \smid \mathcal{M} \models R(s_1(\mathfrak{v}),\dots,s_n(\mathfrak{v})) }, \\
		\dVert{ s = t } & = \Set{\mathfrak{v} \in X \smid s(\mathfrak{v}) = t(\mathfrak{v}) }
	\end{align*}
	are clopen and identified with elements of $ B $.
	Let $ \mathfrak{u} $ be an ultrafilter on $ B $ ($ = $ a point of $ X $).
	The \emph{bounded Boolean ultrapower} $ \mathcal{M}[B]_{\omega}/\mathfrak{u} $ is given by
	\begin{gather*}
		s \sim t \defarrow \dVert{ s = t } \in \mathfrak{u}, \\
		\mathcal{M}[B]_{\omega}/\mathfrak{u} \models R([s_1]_{\mathfrak{u}},\dots,[s_n]_{\mathfrak{u}}) 
		\defarrow \dVert{ R(s_1,\dots,s_n) } \in \mathfrak{u}.
	\end{gather*}
	$ \mathcal{M}[B]_{\omega}/\mathfrak{u} $ has a representation as a filter-quotient
	\[ \mathcal{M}[B]_{\omega}/\mathfrak{u} \simeq \rlim_{U \in \mathfrak{u}} P(D_U) \simeq P_{\mathfrak{u}}, \]
	where $ D_U = \Set{\mathfrak{v} \in X \smid U \in \mathfrak{v}} $ and $ P_{\mathfrak{u}} $ is the stalk over $ \mathfrak{u} $.
\end{Expl}
Bounded Boolean (ultra)powers are not direct generalizations of ordinary (ultra)powers.
\emph{Unbounded Boolean (ultra)powers} are such things, while they involve more complicated sheaf-theoretic constructions.
Fish \cite{FishMThesis} gives a survey of bounded and unbounded Boolean (ultra)powers.
These constructions can be further generalized to the notion of \emph{Boolean product} (see \cite{BurWer1979}, \cite{Werner1982}, and \cite{BurSankUA}),
which involves sheaves on Stone spaces.

\subsection{Filter-Quotients of Heyting-Valued Structures}
We will generalize the construction of $ P/\mathfrak{f} $ to Heyting-valued structures.
We use filter-quotients of Heyting-valued sets (or structures), which appeared in, e.g., \cite[Definition 2.6]{PieVia2020} and \cite[Chapter 34]{Mira2020}.
Let $ (\mathcal{M},\delta) $ be an $ \mathcal{O}(X) $-valued $ \mathcal{L} $-structure.
Given a filter $ \mathfrak{f} $ on $ \mathcal{O}(X) $,
an $ (\mathcal{O}(X)/\mathfrak{f}) $-valued $ \mathcal{L} $-structure $ \mathcal{M}/\mathfrak{f} $ is defined as follows:
\footnote{We cannot consider a colimit $ \rlim_{U \in \mathfrak{f}} \Set{a \in \mathcal{M} \smid \delta(a) = U} $ as in the case of sheaves since restrictions do not necessarily exist.}
we first observe 
\begin{Claim}
	The following relation $ \sim_{\mathfrak{f}} $ on $ \mathcal{M} $ is an equivalence relation
	\[ a \sim_{\mathfrak{f}} b \defarrow \left [ \delta(a) \vee \delta(b) \imply \delta(a,b) \right ] \in \mathfrak{f}. \]
\end{Claim}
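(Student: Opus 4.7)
The plan is to verify the three equivalence-relation axioms in turn. Reflexivity is immediate from $\delta(a,a) = \delta(a)$, which makes $\delta(a) \vee \delta(a) \imply \delta(a,a) = 1_X$, and $1_X$ lies in every filter. Symmetry is clear from the symmetry of $\vee$ and the axiom $\delta(a,b) = \delta(b,a)$.

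The substantive content is transitivity. Suppose $a \sim_{\mathfrak{f}} b$ and $b \sim_{\mathfrak{f}} c$, so that
\[ p := \delta(a) \vee \delta(b) \imply \delta(a,b) \quad \text{and} \quad q := \delta(b) \vee \delta(c) \imply \delta(b,c) \]
both belong to $\mathfrak{f}$. Since $\mathfrak{f}$ is closed under finite meets and is upward closed, it suffices to prove that $p \wedge q \leq \delta(a) \vee \delta(c) \imply \delta(a,c)$, which by the adjunction defining $\imply$ is equivalent to
\[ p \wedge q \wedge (\delta(a) \vee \delta(c)) \leq \delta(a,c). \]
I would expand the left-hand side using the infinitary distributivity of the frame and bound each summand $p \wedge q \wedge \delta(a)$ and $p \wedge q \wedge \delta(c)$ separately. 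For the first, the inclusion $\delta(a) \leq \delta(a) \vee \delta(b)$ combined with the definition of $p$ gives $p \wedge \delta(a) \leq \delta(a,b)$; then since $\delta(a,b) \leq \delta(b) \leq \delta(b) \vee \delta(c)$, the definition of $q$ yields $q \wedge \delta(a,b) \leq \delta(b,c)$. Combining these with the triangle inequality $\delta(a,b) \wedge \delta(b,c) \leq \delta(a,c)$ from the definition of an $\mathcal{O}(X)$-valued set gives $p \wedge q \wedge \delta(a) \leq \delta(a,c)$. The second summand is handled symmetrically, with the roles of $p$ and $q$ swapped.

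The only mildly delicate point I anticipate is the interplay between the disjunctive antecedent $\delta(a) \vee \delta(b)$ and the fact that $\delta(a,b) \leq \delta(a) \wedge \delta(b)$: one must be careful about which side of the implication each inequality applies to. The use of $\vee$ rather than, say, $\wedge$ on the left of the implication is exactly what secures the inclusions $\delta(a), \delta(b) \leq \delta(a) \vee \delta(b)$ that let one chain the two hypotheses together through $b$. Beyond this, the argument relies only on the defining properties of a filter (closure under finite meets and upward closure, together with $1_X \in \mathfrak{f}$); properness or maximality of $\mathfrak{f}$ plays no role here, and frame distributivity is used only in the finite binary form.
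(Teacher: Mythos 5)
Your proof is correct and takes essentially the same route as the paper's: both reduce transitivity, via the adjunction defining $\imply$, to the inequality $p \wedge q \wedge (\delta(a) \vee \delta(c)) \leq \delta(a,c)$, distribute the meet over the join, and bound each summand by chaining through $b$ using $\delta(a,b) \leq \delta(a) \wedge \delta(b)$ and the triangle inequality $\delta(a,b) \wedge \delta(b,c) \leq \delta(a,c)$. The paper only writes out the transitivity step, so your explicit (and correct) handling of reflexivity and symmetry is just a harmless addition.
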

\begin{proof}
	For transitivity, observe
	\begin{align*}
		& \phantom{=} \left ( \delta(a) \vee \delta(b) \imply \delta(a,b) \right ) \wedge \left ( \delta(b) \vee \delta(c) \imply \delta(b,c) \right ) \wedge (\delta(a) \vee \delta(c)) \\
		& = \left [ \delta(a) \wedge \left ( \delta(a) \vee \delta(b) \imply \delta(a,b) \right ) \wedge \left ( \delta(b) \vee \delta(c) \imply \delta(b,c) \right ) \right ] \\
		&\qquad \vee \left [ \delta(c) \wedge \left ( \delta(a) \vee \delta(b) \imply \delta(a,b) \right ) \wedge \left ( \delta(b) \vee \delta(c) \imply \delta(b,c) \right ) \right ] \\
		\intertext{(by using $ \delta(a,b) \leq \delta(a) \wedge \delta(b) $ etc.,)}
		& = \delta(a,b) \wedge \delta(b,c) \leq \delta(a,c).
	\end{align*}
	We then have 
	\[ \left ( \delta(a) \vee \delta(b) \imply \delta(a,b) \right ) \wedge \left ( \delta(b) \vee \delta(c) \imply \delta(b,c) \right ) \leq \delta(a) \vee \delta(c) \imply \delta(a,c). \qedhere \]
\end{proof}
We denote the quotient $ \mathcal{M}/{\sim_{\mathfrak{f}}} $ by $ \mathcal{M}/\mathfrak{f} $ and the equivalence class of $ a \in \mathcal{M} $ by $ [a]_{\mathfrak{f}} $.
In particular, by applying this to the $ \mathcal{O}(X) $-valued set $ (\mathcal{O}(X),\wedge) $,
for which $ U \sim_{\mathfrak{f}} V $ iff $ ( U \leftrightarrow V ) \in \mathfrak{f} $,
we have the quotient Heyting algebra $ \mathcal{O}(X)/\mathfrak{f} $.
By defining the valuation
\footnote{Notice that we use the same notations $ \sim_{\mathfrak{f}} $ and $ [-]_{\mathfrak{f}} $
	for two different equivalence relations on $ \mathcal{M} $ and $ \mathcal{O}(X) $.}
\[ \delta_{\mathfrak{f}}([a]_{\mathfrak{f}},[b]_{\mathfrak{f}}) := [\delta(a,b)]_{\mathfrak{f}}, \]
we can make $ \mathcal{M}/\mathfrak{f} $ into an $ (\mathcal{O}(X)/\mathfrak{f}) $-valued set except that $ \mathcal{O}(X)/\mathfrak{f} $ is not necessarily complete.
We may use the Dedekind--MacNeille completion of $ \mathcal{O}(X)/\mathfrak{f} $ (cf.~\cite[III.3.11]{JohSS})
to define forcing values as in \cite[Definition 2.2]{PieVia2020},
but such a complication will not be necessary for this paper because we will use forcing values 
$ \dVert{\varphi([\bvec{a}]_{\mathfrak{f}})}^{\mathcal{M}/\mathfrak{f}} $ only for atomic formulas $ \varphi $.

For each function $ f $ and each relation $ R $, the morphism 
$ f^{\mathcal{M}/\mathfrak{f}} \colon ((\mathcal{M}/\mathfrak{f})^n,\delta_{\mathfrak{f}}) \to (\mathcal{M}/\mathfrak{f},\delta_{\mathfrak{f}}) $
and the strict relation
$ R^{\mathcal{M}/\mathfrak{f}} \colon (\mathcal{M}/\mathfrak{f})^n \to \mathcal{O}(X)/\mathfrak{f} $
are defined canonically:
\begin{align*}
	f^{\mathcal{M}/\mathfrak{f}}([a_1]_{\mathfrak{f}},\dots,[a_n]_{\mathfrak{f}},[b]_{\mathfrak{f}}) & := [f^{\mathcal{M}}(a_1,\dots,a_n,b)]_{\mathfrak{f}}, \\
	R^{\mathcal{M}/\mathfrak{f}}([a_1]_{\mathfrak{f}},\dots,[a_n]_{\mathfrak{f}}) & := [R^{\mathcal{M}}(a_1,\dots,a_n)]_{\mathfrak{f}}.
\end{align*}
We have finished the construction of the $ (\mathcal{O}(X)/\mathfrak{f}) $-valued $ \mathcal{L} $-structure $ \mathcal{M}/\mathfrak{f} $.
We will call it the \emph{filter-quotient} of $ \mathcal{M} $ by $ \mathfrak{f} $.

Next, we consider filter-quotients of $ \Theta(P) $ for a sheaf $ P $ of $ \mathcal{L} $-structures.
Recall that we already defined an $ \mathcal{L} $-structure $ P/\mathfrak{f} $.

\begin{Lem} \label{lem:P/f-vs-Gamma(M/f)-as-sets}
	Let $ P $ be a sheaf of $ \mathcal{L} $-structures and $ (\mathcal{M},\delta) := \Theta(P) $.
	Then the canonical map $ P/\mathfrak{f} \to \mathcal{M}/\mathfrak{f} $ induces a bijection 
	between $ P/\mathfrak{f} $ and the set of global elements of $ \mathcal{M}/\mathfrak{f} $.
\end{Lem}
\begin{proof}
	Since $ \delta_{\mathfrak{f}}([a]_{\mathfrak{f}}) = [1_X]_{\mathfrak{f}} $ iff $ \delta(a) \in \mathfrak{f} $,
	it is obvious that the image of the canonical map $ P/\mathfrak{f} \to \mathcal{M}/\mathfrak{f} $ consists of global elements.
	We will show this map is injective.
	
	For $ a,b \in \mathcal{M} $ with $ \delta(a),\delta(b) \in \mathfrak{f} $,
	they belong to the same equivalence class in $ P/\mathfrak{f} = \rlim_{U \in \mathfrak{f}} P(U) $ if and only if
	there exists $ U \in \mathfrak{f} $ such that $ U \leq \delta(a) \land \delta(b) $ and $ a|_U = b|_U $.
	On the other hand, by \cite[Proposition 4.7(viii)]{FourSco1979} and separatedness,
	\begin{align*}
		a \sim_{\mathfrak{f}} b & \iff \left [ \delta(a) \vee \delta(b) \imply \bigvee \Set{
			W \leq \delta(a) \wedge \delta(b) \smid a|_W=b|_W
		} \right ] \in \mathfrak{f}, \\
		& \iff W_0 := \bigvee \Set{W \in \mathcal{O}(X) \smid a|_{\delta(a)\wedge W}=b|_{\delta(b)\wedge W}} \in \mathfrak{f}.
	\end{align*}
	Again by separatedness, $ a|_{\delta(a)\wedge W_0}=b|_{\delta(b)\wedge W_0} $.
	Thus, the map $ P/\mathfrak{f} \to \mathcal{M}/\mathfrak{f} $ is injective.
\end{proof}

Note that the map $ P/\mathfrak{f} \to \mathcal{M}/\mathfrak{f} $ is not surjective even if $ X $ is a discrete space.

To give a generalization of the construction of $ P/\mathfrak{f} $ to Heyting-valued structures,
we need to discuss how and when an ordinary structure can be obtained from some ``local sections'' of a Heyting-valued structure.
The following construction is an analogue of \textbf{Definition \ref{def:sections-and-quotients-of-sheaves}}(1).

From an $ \mathcal{O}(X) $-valued $ \mathcal{L} $-structure $ (\mathcal{M},\delta) $,
we would like to construct an ordinary $ \mathcal{L} $-structure $ \Gamma(U,\mathcal{M}) $ as follows.
Set $ \Gamma(U,\mathcal{M}) := \Set{a \in \mathcal{M} \smid \delta(a) =U} $ for $ U \in \mathcal{O}(X)$.
We would like to make $ \Gamma(U,\mathcal{M}) $ into an $\mathcal{L}$-structure so that,
for any relation $ R $ and any $ \bvec{a} \in \Gamma(U,\mathcal{M})^n $,
\[ \Gamma(U,\mathcal{M}) \models R(\bvec{a}) \defarrow R^{\mathcal{M}}(\bvec{a}) = U.\]
To define an interpretation $ f^{\Gamma(U,\mathcal{M})} \colon \Gamma(U,\mathcal{M})^n \to \Gamma(U,\mathcal{M}) $
for each function symbol $ f $, we have to demand the following:
\begin{mdframed}
	\noindent\textbf{Assumption}
	
	For each function symbol $ f $, the morphism $ f^{\mathcal{M}} \colon (\mathcal{M}^n,\delta) \to (\mathcal{M},\delta) $
	is represented by some map $ h \colon \mathcal{M}^n \to \mathcal{M} $ satisfying 
	$ \delta(\bvec{a},\bvec{a'}) \leq \delta(h(\bvec{a}),h(\bvec{a'})) $ and $ \delta(h(\bvec{a})) = \delta(\bvec{a}) $ for any $ \bvec{a},\bvec{a'} $.
\end{mdframed}
By the observation we made in the definition of the functor $ \Theta $ at the beginning of \S\ref{subsec:sheaves-vs-OX-sets},
any Heyting-valued structure of the form $ \Theta(P) $ satisfies the \textbf{Assumption}.
For $ \mathcal{M} $ satisfying the \textbf{Assumption}, we can suitably define $ f^{\Gamma(U,\mathcal{M})} $ 
to be the restriction of $ h $ to $ \Gamma(U,\mathcal{M}) $ and obtain an $ \mathcal{L} $-structure $ \Gamma(U,\mathcal{M}) $.
The satisfaction relation $ \Gamma(U,\mathcal{M}) \models \varphi(\bvec{a}) $ is defined as usual.
The reader should notice that the relations $ \Gamma(U,\mathcal{M}) \models \varphi(\bvec{a}) $
and $ \dVert{\varphi(\bvec{a})} =U $ do not coincide in general.

Given a filter $ \mathfrak{f} $ on $ \mathcal{O}(X) $,
we write $ \Gamma(\mathcal{M}/\mathfrak{f}) $ for the set of global elements of 
the $ (\mathcal{O}(X)/\mathfrak{f}) $-valued $ \mathcal{L} $-structure $ \mathcal{M}/\mathfrak{f} $.
If $ \mathcal{M} $ satisfies the \textbf{Assumption}, then so does $ \mathcal{M}/\mathfrak{f} $,
and $ \Gamma(\mathcal{M}/\mathfrak{f}) $ becomes an $ \mathcal{L} $-structure.
The resulting structure $ \Gamma(\mathcal{M}/\mathfrak{f}) $ will play an essential role in describing our theorems.

Returning to the case of $ (\mathcal{M},\delta) = \Theta(P) $, we have the desired result.
\begin{Prop}
	$ \Gamma(\mathcal{M}/\mathfrak{f}) $ is isomorphic to the $ \mathcal{L} $-structure $ P/\mathfrak{f} $
	under the bijection in \textbf{Lemma \ref{lem:P/f-vs-Gamma(M/f)-as-sets}}.
\end{Prop}
\begin{proof}
	By the above constructions and \textbf{Definition \ref{def:sections-and-quotients-of-sheaves}}(2),
	\begin{align*}
		\Gamma(\mathcal{M}/\mathfrak{f}) \models R([\bvec{a}]_{\mathfrak{f}}) 
		& \defarrow R^{\mathcal{M}/\mathfrak{f}}([\bvec{a}]_{\mathfrak{f}}) := [R^{\mathcal{M}}(\bvec{a})]_{\mathfrak{f}} = [1_X]_{\mathfrak{f}}, \\
		& \iff R^{\mathcal{M}}(\bvec{a} ) = \bigvee \Set{W \leq \delta(\bvec{a}) \smid \bvec{a}|_W \in R^P(W)} \in \mathfrak{f}, \\
		& \iff \exists W \in \mathfrak{f},\, P(W) \models R(\bvec{a}|_W), \\
		& \iff P/\mathfrak{f} \models R([\bvec{a}]_{\mathfrak{f}}). \qedhere
	\end{align*}
\end{proof}
Thus, the construction of $ \Gamma(\mathcal{M}/\mathfrak{f}) $ indeed generalizes that of $ P/\mathfrak{f} $.
In the remainder of this section, let $ \mathcal{M} $ be an $ \mathcal{O}(X) $-valued $ \mathcal{L} $-structure satisfying the \textbf{Assumption}.

\subsection{{\L}o\'{s}'s Theorem}
{\L}o\'{s}-type theorems for sheaves of structures appeared in \cite[p.~179, Ultrastalk Theorem]{Ell1974} (see \S\ref{subsubsec:Ellerman's-viewpoint}),
\cite[Theorem 2.6 attributed to F.~Miraglia]{Brunner2016}, and \cite[Teorema 5.2]{Cai1995}.
The first two of them restrict themselves to $ \forall $-free formulas. Caicedo's result is closer to ours, but no proof is given there.
We give a generalization of {\L}o\'{s}'s theorem improving all these results,
and also give a characterization of Heyting-valued structures for which {\L}o\'{s}'s theorem holds w.r.t.\ any maximal filter,
which generalizes a similar theorem in \cite[Theorem 2.8]{PieVia2020} for Boolean-valued structures consisting of global elements only.

\begin{Def}
	For each $ \mathcal{L} $-formula $ \varphi $, the \emph{G\"{o}del translation} $ \varphi^G $ is defined inductively:
	\begin{itemize}
		\item $ \bot^G \equiv \bot $, and $ \varphi^G \equiv \lnot \lnot \varphi $ if $ \varphi$ is atomic but not $ \bot $.
		\item $ (\varphi \land \psi)^G \equiv \varphi^G \land \psi^G, \qquad (\varphi \lor \psi)^G \equiv \lnot ( \lnot \varphi^G \land \lnot \psi^G ), $
		\item $ (\varphi \limply \psi)^G \equiv \varphi^G \limply \psi^G, $
		\item $ (\forall v\varphi(v,\bvec{u}))^G \equiv \forall v \varphi^G(v,\bvec{u}), \qquad 
		(\exists v\varphi(v,\bvec{u}))^G \equiv \lnot \forall v \lnot \varphi^G(v,\bvec{u}) $. \qedhere
	\end{itemize}
\end{Def}

\begin{Def}
	A filter $ \mathfrak{f} $ on $ \mathcal{O}(X) $ is \emph{$ \mathcal{M} $-generic} when it satisfies the following:
	\begin{itemize}
		\item for each closed $ \mathcal{L}_{\mathcal{M}} $-formula $ \varphi(\bvec{a}) $ with $ \delta(\bvec{a}) \in \mathfrak{f} $,
		either $ \dVert{\varphi^G(\bvec{a})}^{\mathcal{M}} \in \mathfrak{f}$ or $ \dVert{\lnot \varphi^G(\bvec{a})}^{\mathcal{M}} \in \mathfrak{f} $ holds.
		
		\item for any $ \mathcal{L}_{\mathcal{M}} $-formula $ \varphi(v,\bvec{a}) $ with $ \delta(\bvec{a})\in\mathfrak{f} $,
		if $ \dVert{\exists v \varphi^G(v,\bvec{a})}^{\mathcal{M}} \in \mathfrak{f} $, 
		then there exists $ b \in \mathcal{M} $ such that $ \dVert{\varphi^G(b,\bvec{a})}^{\mathcal{M}} \in \mathfrak{f} $.
		\qedhere
	\end{itemize}
\end{Def}

\begin{Thm}[cf.~{\cite[Teorema 5.2]{Cai1995}}] \label{thm:main-Los-thm}
	If $ \mathfrak{f} $ is $ \mathcal{M} $-generic,
	then, for any $ \mathcal{L} $-formula $ \varphi(\bvec{v}) $ and $ \bvec{a} \in \mathcal{M}^n $ with $ \delta(\bvec{a}) \in \mathfrak{f} $,
	\[ \Gamma(\mathcal{M}/\mathfrak{f}) \models \varphi([\bvec{a}]_{\mathfrak{f}}) \iff \dVert{\varphi^G(\bvec{a})}^{\mathcal{M}} \in \mathfrak{f}. \]
\end{Thm}
\begin{proof}
	Let $ \Phi $ be the set of closed $ \mathcal{L}_{\mathcal{M}} $-formulas $ \varphi(\bvec{a}) $ with $ \delta(\bvec{a}) \in \mathfrak{f}$ 
	for which the above equivalence hold.
	We can easily see that $ \Phi $ contains atomic formulas and is closed under the logical connectives $ \land, \lor, \limply $.
	For example, to see that $ \varphi(\bvec{a}),\psi(\bvec{a}) \in \Phi $ implies $ (\varphi(\bvec{a}) \limply \psi(\bvec{a})) \in \Phi $,
	we only have to show
	\[ \delta(\bvec{a}) \wedge \left ( \dVert{\varphi^G(\bvec{a})} \imply \dVert{\psi^G(\bvec{a})} \right ) \in \mathfrak{f} \iff \text{either}\; 
	\dVert{\varphi^G(\bvec{a})} \notin \mathfrak{f} \;\text{or}\;  \dVert{\psi^G(\bvec{a})} \in \mathfrak{f}. \]
	This follows immediately from $ \mathcal{M} $-genericity.
	
	Suppose $ \varphi(b,\bvec{a}) \in \Phi $ for any $ b $ with $ \delta(b) \in \mathfrak{f} $.
	Since $ \lnot \lnot \exists v \varphi^G $ and $ \lnot \forall v \lnot \varphi^G $ are intuitionistically equivalent,
	\begin{align*}
		\dVert{\lnot \forall v \lnot \varphi^G(v,\bvec{a})} \in \mathfrak{f} & \iff \dVert{\lnot \lnot \exists v \varphi^G(v,\bvec{a})} \in \mathfrak{f},  \\
		& \iff \dVert{\exists v \varphi^G(v,\bvec{a})} \in \mathfrak{f}, \\
		& \iff \exists b \in \mathcal{M},\, \dVert{\varphi^G(b,\bvec{a})} \in \mathfrak{f}, \\
		& \iff \exists b \in \mathcal{M},\, \delta(b) \in \mathfrak{f} \;\text{and}\;
		\Gamma(\mathcal{M}/\mathfrak{f}) \models \varphi([b]_{\mathfrak{f}},[\bvec{a}]_{\mathfrak{f}}),\\
		& \iff \Gamma(\mathcal{M}/\mathfrak{f}) \models \exists v \varphi(v,[\bvec{a}]_{\mathfrak{f}}).
	\end{align*}
	
	For the universal quantifier, we need a fact on G\"{o}del translation.
	Since $ (\varphi\leftrightarrow\lnot\lnot\varphi)^G \equiv \varphi^G \leftrightarrow \lnot \lnot \varphi^G $ holds and 
	$ \varphi\leftrightarrow\lnot\lnot\varphi $ is classically valid, 
	$ \varphi^G \leftrightarrow \lnot \lnot \varphi^G $ is intuitionistically valid by \cite[Theorem 6.2.8]{vDalLS}.
	Therefore,
	\begin{align*}
		\dVert{\forall v \varphi^G(v,\bvec{a})} \in \mathfrak{f} & \iff \dVert{ \lnot \forall v \lnot \lnot \varphi^G(v,\bvec{a})} \notin \mathfrak{f},  \\
		& \iff \dVert{\exists v \lnot \varphi^G(v,\bvec{a})} \notin \mathfrak{f},  \\
		& \iff \forall b \in \mathcal{M},\, \delta(b) \in \mathfrak{f} \;\text{implies}\; \dVert{\lnot\varphi^G(b,\bvec{a})} \notin \mathfrak{f}, \\
		& \iff \forall b \in \mathcal{M},\, \delta(b) \in \mathfrak{f} \;\text{implies}\; \dVert{\varphi^G(b,\bvec{a})} \in \mathfrak{f}, \\
		& \iff \forall b \in \mathcal{M},\, \delta(b) \in \mathfrak{f} \;\text{implies}\; 
		\Gamma(\mathcal{M}/\mathfrak{f}) \models \varphi([b]_{\mathfrak{f}},[\bvec{a}]_{\mathfrak{f}}),\\
		& \iff \Gamma(\mathcal{M}/\mathfrak{f}) \models \forall v \varphi(v,[\bvec{a}]_{\mathfrak{f}}). \qedhere
	\end{align*}
\end{proof}

We say a formula is \emph{$ \forall $-free} if it is built up without $ \forall $.
\begin{Cor*}
	In the above notations, suppose that either of the following conditions holds:
	\begin{itemize}
		\item $ \mathcal{O}(X) $ is a complete Boolean algebra.
		
		\item $ \varphi $ is $ \forall $-free. (In particular, $ \varphi^G $ and $ \lnot \lnot \varphi $ are intuitionistically equivalent.)
	\end{itemize}
	Then, for any $ \mathcal{M} $-generic filter $ \mathfrak{f} $ and $ \bvec{a} \in \mathcal{M}^n $ with $ \delta(\bvec{a}) \in \mathfrak{f} $,
	\[ \Gamma(\mathcal{M}/\mathfrak{f}) \models \varphi([\bvec{a}]_{\mathfrak{f}}) \iff \dVert{\varphi(\bvec{a})}^{\mathcal{M}} \in \mathfrak{f}. \qedhere \]
\end{Cor*}

A key to finding $ \mathcal{M} $-generic filters is the following proposition.
For proofs, the reader is guided to refer \cite[Theorem 2.1]{Mira1988} and \cite[Teorema 3.3]{Cai1995}.
\begin{Prop*}[Maximum Principle] \label{prop:max-principle-for-sheaves}
	If $ \mathcal{M} $ is complete as an $ \mathcal{O}(X) $-valued set, then, 
	for any $ \mathcal{L}_{\mathcal{M}} $-formula $ \varphi(v,\bvec{a}) $, there exists $ b \in \mathcal{M} $ such that
	\[ \dVert{\varphi(b,\bvec{a})}^{\mathcal{M}} \leq \dVert{\exists v \varphi(v,\bvec{a})}^{\mathcal{M}} \leq 
	\dVert{\lnot\lnot\varphi(b,\bvec{a})}^{\mathcal{M}} \quad \text{in}\quad \mathcal{O}(X).\]
	We say $ \mathcal{M} $ satisfies the \emph{maximum principle} if the conclusion holds.
\end{Prop*}

In the topological case, the maximum principle means that we can find an open set $ \dVert{\varphi(b,\bvec{a})} $ dense in $ \dVert{\exists v \varphi(v,\bvec{a})} $.

\begin{Rmk}
	Volger \cite[p.~4]{Vol1976} pointed out that the maximum principle for \textit{Boolean}-valued structures holds under a weaker assumption:
	
	{\centering for any $ \{a_i\}_{i \in I} \subseteq \mathcal{M} $ and any (strong) anti-chain $ \{U_i\}_{i \in I} \subseteq \mathcal{O}(X) $ 
		
		(i.e., a pairwise disjoint family) satisfying $ U_i \leq \delta(a_i)$ for each $ i \in I $,
		
		there exists $ a \in \mathcal{M} $ such that $ U_i \leq \delta(a,a_i) $ for each $ i \in I $.
		\par}
	
	\noindent For detailed proof, see \cite[Proposition 2.11]{PieVia2020}, where the authors call this the \emph{mixing property}.
	This does not assume any existence of restrictions of elements,
	and we would like to remove such an assumption from the previous proposition.
	However, we cannot apply their argument to Heyting-valued structures 
	because the anti-chain they consider may not cover $ \dVert{\exists v \varphi} $ in general.
	Bell \cite{BellIST} assumes that the frame in consideration is \textit{refinable}
	to ensure existence of an anti-chain refining $ \dVert{\exists v \varphi} $ and
	to show that a specific Heyting-valued structure satisfies the maximum principle (he calls it the Existence Principle).
	We do not know whether the existence of restrictions and refinements can be removed from the previous proposition.
	
	We also remark that all the results mentioned above on the maximum principle involve the use of the axiom of choice or its equivalents.
\end{Rmk}

\begin{Thm}[Main Theorem] \label{thm:char-of-MP-and-Los}
	\ 
	
	For any $ \mathcal{O}(X) $-valued $ \mathcal{L} $-structure $ \mathcal{M} $ satisfying the \textbf{Assumption}, TFAE:
	\begin{enumerate}[label=(\roman*)]
		\item $ \mathcal{M} $ satisfies the following variant of the maximum principle:
		for any $ \mathcal{L}_{\mathcal{M}} $-formula $ \varphi(v,\bvec{a}) $, there are finitely many $ b_1,\dots,b_r \in \mathcal{M} $ such that
		\[ \bigvee_i \dVert{ \varphi^G(b_i,\bvec{a}) }^{\mathcal{M}} \leq \dVert{\exists v \varphi^G(v,\bvec{a})}^{\mathcal{M}} 
		\leq \lnot\lnot \bigvee_i \dVert{ \varphi^G(b_i,\bvec{a}) }^{\mathcal{M}}.\]
		\item Every maximal filter on $ \mathcal{O}(X) $ is $ \mathcal{M} $-generic.
		\item For any maximal filter $ \mathfrak{m} $ on $ \mathcal{O}(X) $ and
		any closed $ \mathcal{L}_{\mathcal{M}} $-formula $ \varphi(\bvec{a}) $ with $ \delta(\bvec{a}) \in \mathfrak{m} $,
		\[ \Gamma(\mathcal{M}/\mathfrak{m}) \models \varphi([\bvec{a}]_{\mathfrak{m}}) \iff \dVert{\varphi^G(\bvec{a})}^{\mathcal{M}} \in \mathfrak{m}. \]
	\end{enumerate}
\end{Thm}
\begin{proof}
	\noindent (i)$ \Rightarrow $(ii):
	let $ \mathfrak{m} $ be a maximal filter on $ \mathcal{O}(X) $.
	For any $ U \in \mathcal{O}(X)$, either $ U \in \mathfrak{m} $ or $ \neg U \in \mathfrak{m} $ holds.
	Moreover, if $ U \vee V \in \mathfrak{m} $, then $ U \in \mathfrak{m} $ or $ V \in \mathfrak{m} $.
	Thus, the maximum principle implies $ \mathcal{M} $-genericity of $ \mathfrak{m} $.
	
	\noindent (ii)$ \Rightarrow $(iii):
	by \textbf{Theorem \ref{thm:main-Los-thm}}.
	
	\noindent (iii)$ \Rightarrow $(i):
	the following argument is a modification of the proof of \cite[Theorem 2.8]{PieVia2020}.
	To simplify notations, we may assume $ \delta(\bvec{a}) = 1_X $ and suppress the parameter $ \bvec{a} $.
	For an arbitrary $ \bvec{a} $, 
	we may use the frame $ \mathcal{O}(\delta(\bvec{a})) = (\delta(\bvec{a})){\downarrow} $ instead of $ \mathcal{O}(X) $ in the following.
	
	For any $ \mathcal{L}_{\mathcal{M}} $-formula $ \varphi(v) $ with $ \dVert{\exists v \varphi^G(v)} \neq 0_X $,
	we can take a maximal filter $ \mathfrak{m} \ni \dVert{\exists v \varphi^G(v)} $.
	Since $ \exists v \varphi^G \limply \lnot \forall v \lnot \varphi^G $ is intuitionistically valid,
	we have $ \dVert{(\exists v \varphi(v))^G} \in \mathfrak{m} $.
	By the assumption, $ \Gamma(\mathcal{M}/\mathfrak{m}) \models \exists v \varphi(v) $.
	Then there exists $ b \in \mathcal{M} $ such that $ \delta(b) \in \mathfrak{m} $ and
	$ \Gamma(\mathcal{M}/\mathfrak{m}) \models \varphi([b]_{\mathfrak{m}}) $.
	Again by the assumption, there exists $ b \in \mathcal{M} $ such that $ \dVert{\varphi^G(b)} \in \mathfrak{m} $.
	
	We have just shown that any maximal filter containing $ \dVert{(\exists v \varphi(v))^G} $ also contains some $ \dVert{\varphi^G(b)} $.
	Notice that $ \dVert{\varphi^G(b)} $ is a regular element of $ \mathcal{O}(\delta(b)) $
	because $ \varphi^G \leftrightarrow \lnot \lnot \varphi^G $ is intuitionistically valid.
	We write $ \mathrm{Reg}(\mathcal{O}(X)) $ for the complete Boolean algebra of regular elements of $ \mathcal{O}(X) $.
	Now we consider the spectrum $ \mathrm{Spec}(\mathrm{Reg}(\mathcal{O}(X))) $ of $ \mathrm{Reg}(\mathcal{O}(X)) $,
	i.e., the Stone space of ultrafilters on $ \mathrm{Reg}(\mathcal{O}(X)) $ whose basic (closed) open sets are of the form
	\[ D(U) := \Set{\mathfrak{u} \in \mathrm{Spec}(\mathrm{Reg}(\mathcal{O}(X))) \smid \mathfrak{u} \ni U } 
	\quad \text{for $ U \in \mathrm{Reg}(\mathcal{O}(X))$.} \]
	Since maximal filters on $ \mathcal{O}(X) $ correspond to ultrafilters on $ \mathrm{Reg}(\mathcal{O}(X)) $
	(see \cite[Exercise II.4.9]{JohSS}, \cite[Theorem 1.44]{Sipos}), the above observation yields
	\footnote{While $ \dVert{\varphi^G(b)} $ is regular in $ \mathcal{O}(\delta(b)) $, it is not necessarily regular in $ \mathcal{O}(X) $.
		This is why we use $ \neg \neg \dVert{\varphi^G(b)} $ here.}
	\[ D\left (\dVert{(\exists v \varphi(v))^G}\right ) \subseteq \bigcup_{b \in \mathcal{M}} D\left (\neg\neg\dVert{\varphi^G(b)}\right ). \]
	By compactness of $ D(\dVert{(\exists v \varphi(v))^G}) $, we can find $ b_1,\dots,b_r $ such that
	\[ D\left (\dVert{(\exists v \varphi(v))^G}\right ) \subseteq \bigcup_{i} D\left (\neg\neg\dVert{\varphi^G(b_i)}\right )
	=D \left (\textstyle \neg\neg \bigvee_i \dVert{ \varphi^G(b_i) } \right ). \]
	Hence, we have $ \dVert{\exists v \varphi^G(v)} \leq \dVert{(\exists v \varphi(v))^G} 
	\leq \neg\neg \bigvee_i \dVert{ \varphi^G(b_i)} $.
\end{proof}

Combining the results in this section, we obtain
\begin{Cor}[Classical {\L}o\'{s}'s theorem]
	Let $ X $ be a set, $ \{\mathcal{M}_x\}_{x \in X} $ an $ X $-indexed family of non-empty $ \mathcal{L} $-structures, and $ \mathfrak{u} $ an ultrafilter over $ X $.
	Then, for any $ \mathcal{L} $-formula $ \varphi(u_1,\dots,u_n) $ and $ a^1,\dots,a^n \in \prod_x \mathcal{M}_x $,
	\begin{align*}
		& \phantom{\iff}\;\prod_x \mathcal{M}_x /\mathfrak{u} \models \varphi([a^1]_{\mathfrak{u}},\dots,[a^n]_{\mathfrak{u}}) \\
		& \iff \Set{ x \in X \smid \mathcal{M}_x \models \varphi(a^1_x,\dots,a^n_x) } \in \mathfrak{u}. \qedhere
	\end{align*}
\end{Cor}
\begin{proof}
	Let $ P $ be the sheaf corresponding to the local homeomorphism $ \coprod_{x \in X} \mathcal{M}_x \to X $ as in \textbf{Example \ref{expl:products}}.
	The statement follows from the facts $ \prod_x \mathcal{M}_x /\mathfrak{u} \simeq P/\mathfrak{u} \simeq \Gamma(\Theta(P)/\mathfrak{u})$ and
	$ \dVert{\varphi(a^1,\dots,a^n)}^{\Theta(P)} =  \Set{ x \in X \smid \mathcal{M}_x \models \varphi(a^1_x,\dots,a^n_x) } $.
\end{proof}

We remark that Pierobon \& Viale \cite{PieVia2020} give set-theoretic examples of
\begin{itemize}
	\item a Boolean-valued structure which is not a sheaf but satisfies the maximum principle, and 
	\item a Boolean-valued structure violating {\L}o\'{s}'s theorem (and the maximum principle).
\end{itemize}

\subsubsection{Ellerman's Viewpoint} \label{subsubsec:Ellerman's-viewpoint}
Various {\L}o\'{s}-type theorems for specific sheaves of structures have been considered in the literature.
Some of them are special cases of our theorem, but others are not.
For simplicity, we treat $ \forall $-free formulas only.
Let $ X $ be a topological space and $ \mathrm{Spec}(X) $ be the space of prime filters on the frame $ \mathcal{O}(X) $.
$ \mathrm{Spec}(X) $ has the basic open set $ D_U=\Set{\mathfrak{p} \smid U \in \mathfrak{p}} $ for each $ U \in \mathcal{O}(X) $.
We have a continuous map $ \eta \colon X \to  \mathrm{Spec}(X) $ sending $ x $ to $ \mathfrak{n}_x $.
For any sheaf $ P $ of $ \mathcal{L} $-structures, the direct image sheaf $ \eta_*P $ on $ \mathrm{Spec}(X) $ is again a sheaf of $ \mathcal{L} $-structures.
Ellerman \cite[p.~179]{Ell1974} showed the following (cf.~\cite{Mulv1977} and \cite{Sipos}):
\begin{Thm*}[Ultrastalk Theorem]
	For any maximal filter $ \mathfrak{m} \ni U $ and 
	any closed $ \mathcal{L}_{\Theta(P)} $-formula $ \varphi(\bvec{a}) $ with $ \bvec{a} \in P(U)^n=(\eta_*P)(D_U)^n $,
	\[ (\eta_*P)_{\mathfrak{m}} \models \varphi([\bvec{a}]_{\mathfrak{m}}) \iff \dVert{\varphi(\bvec{a})} \in \mathfrak{m}. \qedhere\]
\end{Thm*}
Our theorem subsumes the Ultrastalk Theorem since
\[ (\eta_*P)_{\mathfrak{m}} = \rlim_{D \ni \mathfrak{m}} (\eta_*P)(D) \simeq \rlim_{D_U \ni \mathfrak{m}} (\eta_*P)(D_U)
=\rlim_{U \in \mathfrak{m}} P(U) = P/\mathfrak{m}. \]
Especially, {\L}o\'{s} theorem for unbounded Boolean ultrapowers \cite[Theorem 1.5]{Mans1971} is under our scope (cf.~\cite{Macnab1977}).
However, Ellerman's approach suggests a significant viewpoint missing in ours:
various model-theoretic constructions are realized by taking stalks of \textit{sheaves on the spectrum of a distributive lattice}.
For example, as we saw in \textbf{Example \ref{expl:bdd-boolean-ultrapower}},
a bounded Boolean ultrapower is a stalk over an ultrafilter on a (possibly non-complete) Boolean algebra.
There are {\L}o\'{s}-type theorems for such structures, e.g.,
\cite[Lemma 7.1]{BurWer1979} for \textit{a family of} Boolean products.
The relationship between these theorems and our approach should be explored elsewhere (see the comments in the next section).

\section{Related Topics and Future Directions} \label{sec:conclusion}

Finally, we give an overview of various sheaf-theoretic methods in model theory with an expanded list of previous works,
and indicate future directions from a topos-theoretic perspective.

\paragraph{Forcing and Generic Models:}
We again assume all formulas are $ \forall $-free.
Let $ P $ be a sheaf of $ \mathcal{L} $-structures on a topological space $ X $.
As we noticed in \S\ref{subsec:forcing-val}, forcing values give the sheaf semantics in $ \mathbf{Sh}(X) $. 
We can consider another forcing relation, for $ x \in X $,
\[ x \Vdash_P \varphi(\bvec{a}) \defarrow x \in \dVert{\varphi(\bvec{a})}.  \]
Caicedo \cite{Cai1995} called ``$ U \Vdash $'' the local semantics and ``$ x \Vdash $'' the punctual semantics.

On the other hand, each stalk $ P_x $ is an $ \mathcal{L} $-structure, and we can also consider the relation $ P_x \models \varphi(\bvec{a}_x) $
for each closed $ \mathcal{L}_{\mathcal{M}} $-formula $ \varphi(\bvec{a}) $ with $ x \in \delta(\bvec{a}) $.
Define the \emph{discrete value} of a formula:
\[ \dvert{\varphi(\bvec{a})} := \Set{x \in \delta(\bvec{a}) \smid P_x \models \varphi(\bvec{a}_x)}. \]
For any atomic relation $ R $, by definition,
\[ P_x \models R(\bvec{a}_x) \iff \exists V \ni x,\, P(V) \models R(\bvec{a}|_V), \]
i.e., $ \dvert{R(\bvec{a})} = \dVert{R(\bvec{a})}$.
However, in general, $ \dvert{\varphi(\bvec{a})} \neq \dVert{\varphi(\bvec{a})} $.
Some authors considered the relationship between them (\cite[\S1]{Mans1977} and \cite[Theorem 4.3, Lemma 5.1]{Lou1979}).

Kaiser \cite{Kai1977} addressed the problem when the relations $ P_x \models \varphi(\bvec{a}_x) $ and  $ x \Vdash_P \varphi(\bvec{a}) $ coincide for any formula.
He called such $ P_x $ a \textit{generic stalk}.
If the filter $ \mathfrak{n}_x $ is $ \Theta(P) $-generic, then $ P_x $ is a generic stalk by our {\L}o\'{s}-type theorem.
Kaiser used generic stalks to obtain omitting types and consistency results similar to those in \cite{Kei1973} (cf.~\cite[\S6]{Cai1995}, \cite{BruMira2004}).

From a topos-theoretic perspective, Blass \& Scedrov \cite{BlaSce1983} constructed the classifying toposes of existentially closed models and finite-generic models.
Their work was apparently inspired by Keisler's viewpoint \cite{Kei1973} and might be related to ours.

\paragraph{Stalks, Global Sections, and Induced Geometric Morphisms:}
In addition to stalks of sheaves, the structure $ \Gamma(X,P) $ of global sections is of our future interest (see below).
The Feferman--Vaught theorem works for global sections just like {\L}o\'{s}'s theorem does for stalks.
Comer \cite{Comer1974} gave a sheaf-theoretic interpretation of the original Feferman--Vaught theorem \cite{FefVau1959}.
Feferman--Vaught type theorems and their applications to sentences preserved under taking global sections were pursued in
\cite{Vol1976}, \cite{LavLuc1985}, \cite{Mans1977}, \cite{Taka1980} and \cite{BurWer1979} (cf.~\cite{Vol1979}).

From a topos-theoretic viewpoint, taking stalks and global sections can be seen as part of \textit{geometric morphisms}.
Any morphism $ f \colon X \to Y $ of locales (\textbf{Definition \ref{def:cat-of-locales}}) or of topological spaces
induces a geometric morphism $ (f^*,f_*) \colon \mathbf{Sh}(X) \to \mathbf{Sh}(Y) $. Then,
\begin{itemize}
	\item The stalk $ P_x $ is $ f^*P $ for the geometric morphism $ \mathbf{Set} \to \mathbf{Sh}(X) $ induced by the point $ f=x \colon 1 \to X $.
	\item The set $ P(X) $ is $ f_*P $ for the (essentially unique) geometric morphism $ \mathbf{Sh}(X) \to \mathbf{Set} $ induced by $ f \colon X \to 1 $.
\end{itemize}
Furthermore, we can construct a geometric morphism $ \mathbf{Set}(\mathcal{O}(X)) \to \mathbf{Set}(\mathcal{O}(Y)) $,
and it is canonically identified with $ (f^*,f_*) $ via the equivalence in \textbf{Corollary \ref{cor:Sh(X)=Set(OX)}}.
Therefore, we may investigate stalks and global sections in the more general framework of base change of Heyting-valued structures.
This categorical approach has an advantage over the set-theoretic approach of \cite{ACM2019} to base change of Heyting-valued universes
since the construction of geometric morphisms is much simpler and
the logical behavior under base change along them is well-understood for various classes of morphisms of locales \cite[Chapter C3]{Elephant}.

\paragraph{Sheaf Representation and Model Theory for Sheaves:}
Algebraic structures often have representations as global sections of sheaves of structures.
Knoebel's monograph \cite{KnoebelSABS} includes a brief description of a history of sheaf representations of algebras (see also \cite[Chapter V]{JohSS}).
Sheaf representations over Stone spaces, e.g., Pierce representation of commutative rings \cite{Pierce1967}, play a special role in model theory.
Following Lipshitz \& Saracino \cite{LipSara1973}, Macintyre \cite{Mac1973} established a general method
for obtaining model-companions of theories whose models have sheaf representations over Stone spaces with good stalks (cf.~\cite{Cars1973}).
He exploited Comer's version of the Feferman--Vaught theorem to transfer model-theoretic properties of stalks to global sections.
This line of research was followed by \cite{Weis1975}, \cite{vdD1977}, \cite{Comer1976} and \cite{BurWer1979} (see also \cite[\S6]{Mac1977}).
Later, Bunge \& Reyes \cite{BunRey1981} gave a topos-theoretic unification (cf.~\cite{Bun1981}).

In this line of research, sheaves having good stalks are often sheaf models of well-behaved theories.
For example, any (commutative)  von~Neumann regular ring $ R $ is represented by a sheaf of rings over a Stone space $ X(R) $ whose stalks are fields,
and such a sheaf is a model of the theory of fields in the topos $ \mathbf{Sh}(X(R)) $.
The theory of von~Neumann regular rings has the model-completion, whose models are represented by ``algebraically closed fields'' in sheaf toposes over Stone spaces.
Thus, we may expect that developing model theory for sheaves will deepen our understanding of ordinary model theory.
Model theory for sheaves has been studied intermittently by some authors.
The pioneering work is \cite{Lou1979}, where Loullis had already pointed out the importance of the viewpoint we just mentioned.
Our standpoint emphasizing Heyting-valued structures was greatly influenced by him too.
Some other authors considered model-theoretic phenomena for models in various toposes (\cite{Bell1981}, \cite{Zawa1983}, \cite{GeoVoi1985}, \cite{Mira1988}, \cite{Ack2014}).

In fact, model theory for sheaves is part of what should be called \emph{topos-internal model theory} or \emph{model theory in toposes}.
Topos-internal model theory concerns theories internal to toposes,
and internal theories in a sheaf topos admit \textit{sheaves of function symbols and relation symbols} (cf.~\cite{SJHenryThesis}).
It must be closer to doing model theory in a Heyting-valued universe (cf.~\cite{KusKutaBVA}).
The approach by Brunner \& Miraglia \cite{Brunner2016}, admitting a presheaf of constant symbols in place of a set of constants,
is regarded as a restricted form of topos-internal model theory.
In contrast to the scarcity of research on topos-internal model theory,
there is much more on universal algebra in toposes and sheaf models for constructive mathematics.

Finally, we would like to mention a potential application of topos-internal model theory to algebraic geometry.
At the end of \cite{Lou1979}, Loullis suggests that algebraic geometry over von~Neumann regular rings \cite{SaraWeis1975} 
could be obtained by doing algebraic geometry in some topos.
The works of Bunge \cite{Bun1982} and her student MacCaull \cite{MacCaull1988} reflect that idea, but no one followed them.
We leave that direction as the ultimate goal of our research.


\begin{thebibliography}{Elephant} 
	%
	\bibitem[Ack14]{Ack2014}
	N. L. Ackerman. ``On Transferring Model Theoretic Theorems of $
	\mathcal {L}_{\infty ,\omega } $ in the Category of Sets to a Fixed
	Grothendieck Topos''. In: \emph{Log. Univers.} \textbf{8}.3–4 (2014),
	pp. 345–391. \textsc{doi}: \href
	{https://doi.org/10.1007/s11787-014-0105-5} {\nolinkurl
		{10.1007/s11787-014-0105-5}}.
	%
	\bibitem[ACM19]{ACM2019}
	J. G. Alvim, A. F. S. Cahali, and H. L. Mariano. ``Induced Morphisms
	between Heyting-valued Models''. Dec. 3, 2019. arXiv: \href
	{https://arxiv.org/abs/1910.08193} {\nolinkurl {1910.08193}
		\texttt{[math.CT]}}.
	%
	\bibitem[Bel81]{Bell1981}
	J. L. Bell. ``Isomorphism of Structures in $ S $-Toposes''. In: \emph{J.
		Symb. Log.} \textbf{46}.3 (1981), pp. 449–459. \textsc{doi}: \href
	{https://doi.org/10.2307/2273748} {\nolinkurl {10.2307/2273748}}.
	%
	\bibitem[Bel05]{BellBVMIP}
	J. L. Bell. \emph{Set Theory: Boolean-Valued Models and
		Independence Proofs}. 3rd ed. Oxford Logic Guides 47. Oxford
	University Press, 2005.
	%
	\bibitem[Bel14]{BellIST}
	J. L. Bell. \emph{Intuitionistic Set Theory}. Stud. Log. (Lond.) 50.
	College Publications, 2014. Available at \url
	{http://publish.uwo.ca/ jbell/INTBOOK(Repaired).pdf}.
	%
	\bibitem[BS83]{BlaSce1983}
	A. Blass and A. Scedrov. ``Classifying Topoi and Finite Forcing''. In:
	\emph{J. Pure Appl. Algebra} \textbf{28}.2 (1983), pp. 111–140.
	\textsc{doi}: \href {https://doi.org/10.1016/0022-4049(83)90085-3}
	{\nolinkurl {10.1016/0022-4049(83)90085-3}}.
	%
	\bibitem[HoCA3]{HoCA3}
	F. Borceux. \emph{Handbook of Categorical Algebra}. Vol. 3:
	\emph{Categories of Sheaves}. Encyclopedia Math. Appl. 53.
	Cambridge University Press, 1994.
	%
	\bibitem[Bru16]{Brunner2016}
	A. B. M. Brunner. ``Model Theory in Sheaves''. In: \emph{South Amer.
		J. Log.} \textbf{2}.2 (2016), pp. 379–404.
	%
	\bibitem[BM04]{BruMira2004}
	A. B. M. Brunner and F. Miraglia. ``An Omitting Types Theorem for
	Sheaves over Topological Spaces''. In: \emph{Log. J. IGPL} \textbf{12}.6
	(2004), pp. 525–548. \textsc{doi}: \href
	{https://doi.org/10.1093/jigpal/12.6.525} {\nolinkurl
		{10.1093/jigpal/12.6.525}}.
	%
	\bibitem[Bun81]{Bun1981}
	M. Bunge. ``Sheaves and Prime Model Extensions''. In: \emph{J.
		Algebra} \textbf{68}.1 (1981), pp. 79–96. \textsc{doi}: \href
	{https://doi.org/10.1016/0021-8693(81)90286-6} {\nolinkurl
		{10.1016/0021-8693(81)90286-6}}.
	%
	\bibitem[Bun82]{Bun1982}
	M. Bunge. ``On the Transfer of an Abstract Nullstellensatz''. In:
	\emph{Comm. Algebra} \textbf{10}.17 (1982), pp. 1891–1906.
	\textsc{doi}: \href {https://doi.org/10.1080/00927878208822810}
	{\nolinkurl {10.1080/00927878208822810}}.
	%
	\bibitem[BR81]{BunRey1981}
	M. Bunge and G. E. Reyes. ``Boolean Spectra and Model Completions''.
	In: \emph{Fund. Math.} \textbf{113}.3 (1981), pp. 165–173. \textsc{doi}:
	\href {https://doi.org/10.4064/fm-113-3-165-173} {\nolinkurl
		{10.4064/fm-113-3-165-173}}.
	%
	\bibitem[BS12]{BurSankUA}
	S. Burris and H. P. Sankappanavar. \emph{A Course in Universal
		Algebra}. The Millennium Edition. 2012. \textsc{url}: \url
	{http://www.math.uwaterloo.ca/~snburris/htdocs/ualg.html}.
	%
	\bibitem[BW79]{BurWer1979}
	S. Burris and H. Werner. ``Sheaf Constructions and Their Elementary
	Properties''. In: \emph{Trans. Amer. Math. Soc.} \textbf{248}.2 (1979),
	pp. 269–309. \textsc{doi}: \href
	{https://doi.org/10.1090/S0002-9947-1979-0522263-8} {\nolinkurl
		{10.1090/S0002-9947-1979-0522263-8}}.
	%
	\bibitem[Cai95]{Cai1995}
	X. Caicedo. ``L\'{o}gica de los haces de estructuras''. Spanish. In:
	\emph{Rev. Acad. Colombiana Cienc. Exact. F\'{i}s. Natur.}
	\textbf{19}.74 (1995), pp. 569–586.
	%
	\bibitem[Cara14]{CarBackground}
	O. Caramello. ``Topos-theoretic Background''. Sept. 22, 2014.
	\textsc{url}: \url
	{http://www.oliviacaramello.com/Papers/Papers.htm}.
	%
	\bibitem[TST]{CarTST}
	O. Caramello. \emph{Theories, Sites, Toposes: Relating and studying
		mathematical theories through topos-theoretic `bridges'}. Oxford
	University Press, 2018.
	%
	\bibitem[Cars73]{Cars1973}
	A. B. Carson. ``The Model Completion of the Theory of Commutative
	Regular Rings''. In: \emph{J. Algebra} \textbf{27}.1 (1973), pp. 136–146.
	\textsc{doi}: \href {https://doi.org/10.1016/0021-8693(73)90169-5}
	{\nolinkurl {10.1016/0021-8693(73)90169-5}}.
	%
	\bibitem[Com74]{Comer1974}
	S. D. Comer. ``Elementary Properties of Structures of Sections''. In:
	\emph{Bol. Soc. Mat. Mex. (2)} \textbf{19}.2 (1974), pp. 78–85.
	%
	\bibitem[Com76]{Comer1976}
	S. D. Comer. ``Complete and Model-Complete Theories of Monadic
	Algebras''. In: \emph{Colloq. Math.} \textbf{34}.2 (1976), pp. 183–90.
	\textsc{doi}: \href {https://doi.org/10.4064/cm-34-2-183-190}
	{\nolinkurl {10.4064/cm-34-2-183-190}}.
	%
	\bibitem[vDal13]{vDalLS}
	D. van Dalen. \emph{Logic and Structure}. 5th ed. Universitext.
	Springer-Verlag, 2013. \textsc{doi}: \href
	{https://doi.org/10.1007/978-1-4471-4558-5} {\nolinkurl
		{10.1007/978-1-4471-4558-5}}.
	%
	\bibitem[vdDri77]{vdD1977}
	L. van den Dries. ``Artin–Schreier Theory for Commutative Regular
	Rings''. In: \emph{Ann. Math. Logic} \textbf{12} (1977), pp. 113–150.
	\textsc{doi}: \href {https://doi.org/10.1016/0003-4843(77)90012-2}
	{\nolinkurl {10.1016/0003-4843(77)90012-2}}.
	%
	\bibitem[Ell74]{Ell1974}
	D. P. Ellerman. ``Sheaves of Structures and Generalized Ultraproducts''.
	In: \emph{Ann. Math. Logic} \textbf{7}.2–3 (1974), pp. 163–195.
	\textsc{doi}: \href {https://doi.org/10.1016/0003-4843(74)90014-X}
	{\nolinkurl {10.1016/0003-4843(74)90014-X}}.
	%
	\bibitem[FV59]{FefVau1959}
	S. Feferman and R. L. Vaught. ``The First Order Properties of Products
	of Algebraic Systems''. In: \emph{Fund. Math.} \textbf{47}.1 (1959),
	pp. 57–103. \textsc{doi}: \href
	{https://doi.org/10.4064/fm-47-1-57-103} {\nolinkurl
		{10.4064/fm-47-1-57-103}}.
	%
	\bibitem[Fis00]{FishMThesis}
	W. Fish. ``Boolean Ultrapowers''. MSc thesis. University of Cape Town,
	2000. \textsc{url}: \url {https://open.uct.ac.za/handle/11427/13892}.
	%
	\bibitem[FS79]{FourSco1979}
	M. Fourman and D. Scott. ``Sheaves and Logic''. In:
	\emph{Applications of Sheaves}. Ed. by M. Fourman, C. Mulvey, and
	D. Scott. Lecture Notes in Math. 753. Springer-Verlag, 1979,
	pp. 302–401. \textsc{doi}: \href {https://doi.org/10.1007/BFb0061824}
	{\nolinkurl {10.1007/BFb0061824}}.
	%
	\bibitem[GV85]{GeoVoi1985}
	G. Georgescu and I. Voiculescu. ``Eastern Model-Theory for
	Boolean-Valued Theories''. In: \emph{Math. Log. Q. (Z. Math. Logik
		Grundlagen Math.)} \textbf{31}.1–6 (1985), pp. 79–88. \textsc{doi}:
	\href {https://doi.org/10.1002/malq.19850310109} {\nolinkurl
		{10.1002/malq.19850310109}}.
	%
	\bibitem[Hen13]{SJHenryThesis}
	S. J. Henry. ``Classifying Topoi and Preservation of Higher Order Logic
	by Geometric Morphisms''. PhD thesis. The University of Michigan,
	2013. \textsc{url}: \url {http://hdl.handle.net/2027.42/99993}.
	%
	\bibitem[Hig84]{Higgs1984}
	D. Higgs. ``Injectivity in the Topos of Complete Heyting Algebra Valued
	Sets''. In: \emph{Canad. J. Math.} \textbf{36}.3 (1984), pp. 550–568.
	\textsc{doi}: \href {https://doi.org/10.4153/CJM-1984-034-4}
	{\nolinkurl {10.4153/CJM-1984-034-4}}.
	%
	\bibitem[Hod93]{HodgesMT}
	W. Hodges. \emph{Model Theory}. Encyclopedia Math. Appl. 42.
	Cambridge University Press, 1993.
	%
	\bibitem[HJP80]{HJP1980}
	J. M. E. Hyland, P. T. Johnstone, and A. M. Pitts. ``Tripos Theory''. In:
	\emph{Math. Proc. Cambridge Philos. Soc.} \textbf{88}.2 (1980),
	pp. 205–232. \textsc{doi}: \href
	{https://doi.org/10.1017/S0305004100057534} {\nolinkurl
		{10.1017/S0305004100057534}}.
	%
	\bibitem[Joh82]{JohSS}
	P. T. Johnstone. \emph{Stone Spaces}. Cambridge Stud. Adv. Math. 3.
	Cambridge University Press, 1982.
	%
	\bibitem[Elephant]{Elephant}
	P. T. Johnstone. \emph{Sketches of an Elephant: A Topos Theory
		Compendium}. 2 vols. Oxford Logic Guides 43, 44. Clarendon Press,
	2002.
	%
	\bibitem[Kai77]{Kai1977}
	K. Kaiser. ``On Generic Stalks of Sheaves''. In: \emph{J. Lond. Math.
		Soc.} \textbf{s2-16}.3 (1977), pp. 385–392. \textsc{doi}: \href
	{https://doi.org/10.1112/jlms/s2-16.3.385} {\nolinkurl
		{10.1112/jlms/s2-16.3.385}}.
	%
	\bibitem[Kei73]{Kei1973}
	H. J. Keisler. ``Forcing and the Omitting Types Theorem''. In:
	\emph{Stuidies in Model Theory}. Ed. by M. D. Morley. MAA Stud.
	Math. 8. The Mathematical Association of America, 1973, pp. 96–133.
	%
	\bibitem[Kno12]{KnoebelSABS}
	A. Knoebel. \emph{Sheaves of Algebras over Boolean Spaces}.
	Birkhäuser, 2012. \textsc{doi}: \href
	{https://doi.org/10.1007/978-0-8176-4642-4} {\nolinkurl
		{10.1007/978-0-8176-4642-4}}.
	%
	\bibitem[KK99]{KusKutaBVA}
	A. G. Kusraev and S. S. Kutateladze. \emph{Boolean Valued Analysis}.
	Math. Appl. 494. Springer, 1999. \textsc{doi}: \href
	{https://doi.org/10.1007/978-94-011-4443-8} {\nolinkurl
		{10.1007/978-94-011-4443-8}}.
	%
	\bibitem[LL85]{LavLuc1985}
	R. Lavendhomme and T. E. Lucas. ``A Non-Boolean Version of
	Feferman–Vaught's Theorem''. In: \emph{Math. Log. Q. (Z. Math.
		Logik Grundlagen Math.)} \textbf{31}.19–20 (1985), pp. 299–308.
	\textsc{doi}: \href {https://doi.org/10.1002/malq.19850311904}
	{\nolinkurl {10.1002/malq.19850311904}}.
	%
	\bibitem[LS73]{LipSara1973}
	L. Lipshitz and D. Saracino. ``The Model Companion of the Theory of
	Commutative Rings without Nilpotent Elements''. In: \emph{Proc.
		Amer. Math. Soc.} \textbf{38}.2 (1973), pp. 381–387. \textsc{doi}: \href
	{https://doi.org/10.1090/S0002-9939-1973-0439624-8} {\nolinkurl
		{10.1090/S0002-9939-1973-0439624-8}}.
	%
	\bibitem[Lou79]{Lou1979}
	G. Loullis. ``Sheaves and Boolean Valued Model Theory''. In: \emph{J.
		Symb. Log.} \textbf{44}.2 (1979), pp. 153–183. \textsc{doi}: \href
	{https://doi.org/10.2307/2273725} {\nolinkurl {10.2307/2273725}}.
	%
	\bibitem[SGL]{SGL}
	S. Mac Lane and I. Moerdijk. \emph{Sheaves in Geometry and Logic: A
		First Introduction to Topos Theory}. Universitext. Springer-Verlag, 1992.
	\textsc{doi}: \href {https://doi.org/10.1007/978-1-4612-0927-0}
	{\nolinkurl {10.1007/978-1-4612-0927-0}}.
	%
	\bibitem[MC88]{MacCaull1988}
	W. MacCaull. ``On the Validity of Hilbert's Nullstellensatz, Artin's
	Theorem, and Related Results in Grothendieck Toposes''. In: \emph{J.
		Symb. Log.} \textbf{53}.4 (1988), pp. 1177–1187. \textsc{doi}: \href
	{https://doi.org/10.2307/2274612} {\nolinkurl {10.2307/2274612}}.
	%
	\bibitem[Maci73]{Mac1973}
	A. Macintyre. ``Model-Completeness for Sheaves of Structures''. In:
	\emph{Fund. Math.} \textbf{81}.1 (1973), pp. 73–89. \textsc{doi}: \href
	{https://doi.org/10.4064/fm-81-1-73-89} {\nolinkurl
		{10.4064/fm-81-1-73-89}}.
	%
	\bibitem[Maci77]{Mac1977}
	A. Macintyre. ``Model Completeness''. In: \emph{Handbook of
		Mathematical Logic}. Ed. by J. Barwise. Stud. Logic Found. Math. 90.
	North Holland, 1977, pp. 139–180. \textsc{doi}: \href
	{https://doi.org/10.1016/S0049-237X(08)71100-5} {\nolinkurl
		{10.1016/S0049-237X(08)71100-5}}.
	%
	\bibitem[Macn77]{Macnab1977}
	D. S. Macnab. ``Some Applications of Double-Negation Sheafification''.
	In: \emph{Proc. Edinb. Math. Soc. (2)} \textbf{20}.4 (1977), pp. 279–285.
	\textsc{doi}: \href {https://doi.org/10.1017/S001309150002650X}
	{\nolinkurl {10.1017/S001309150002650X}}.
	%
	\bibitem[MR77]{MakReyFOCL}
	M. Makkai and G. E. Reyes. \emph{First Order Categorical Logic:
		Model-Theoretical Methods in the Theory of Topoi and Related
		Categories}. Lecture Notes in Math. 611. Springer-Verlag, 1977.
	%
	\bibitem[Man71]{Mans1971}
	R. Mansfield. ``The Theory of Boolean Ultrapowers''. In: \emph{Ann.
		Math. Logic} \textbf{2}.3 (1971), pp. 297–323. \textsc{doi}: \href
	{https://doi.org/10.1016/0003-4843(71)90017-9} {\nolinkurl
		{10.1016/0003-4843(71)90017-9}}.
	%
	\bibitem[Man77]{Mans1977}
	R. Mansfield. ``Sheaves and Normal Submodels''. In: \emph{J. Symb.
		Log.} \textbf{42}.2 (1977), pp. 241–250. \textsc{doi}: \href
	{https://doi.org/10.2307/2272125} {\nolinkurl {10.2307/2272125}}.
	%
	\bibitem[Mir88]{Mira1988}
	F. Miraglia. ``The Downward L\"{o}wenheim–Skolem Theorem for $ L
	$-Structures in $ \Omega $-Sets''. In: \emph{Methods and Applications
		of Mathematical Logic}. Ed. by W. A. Carnielli and L. P. de Alcantara.
	Contemp. Math. 69. American Mathematical Society, 1988, pp. 189–208.
	\textsc{doi}: \href {https://doi.org/10.1090/conm/069/933810}
	{\nolinkurl {10.1090/conm/069/933810}}.
	%
	\bibitem[Mir20]{Mira2020}
	F. Miraglia. \emph{An Introduction to Partially Ordered Structures and
		Sheaves}. Série L 2. Lósica no Avião, 2020. \textsc{url}: \url
	{http://lna.unb.br}.
	%
	\bibitem[Mul77]{Mulv1977}
	C. Mulvey. ``A Remark on the Prime Stalk Theorem''. In: \emph{J. Pure
		Appl. Algebra} \textbf{10}.3 (1977), pp. 253–256. \textsc{doi}: \href
	{https://doi.org/10.1016/0022-4049(77)90005-6} {\nolinkurl
		{10.1016/0022-4049(77)90005-6}}.
	%
	\bibitem[vOos08]{vOosten2008}
	J. van Oosten. \emph{Realizability: An Introduction to its Categorical
		Side}. Stud. Logic Found. Math. 152. Elsevier, 2008.
	%
	\bibitem[PP12]{PicPulFL}
	J. Picado and A. Pultr. \emph{Frames and Locales: Topology without
		points}. Front. Math. Birkhäuser, 2012. \textsc{doi}: \href
	{https://doi.org/10.1007/978-3-0348-0154-6} {\nolinkurl
		{10.1007/978-3-0348-0154-6}}.
	%
	\bibitem[Pie67]{Pierce1967}
	R. S. Pierce. ``Modules over Commutative Regular Rings''. In:
	\emph{Mem. Amer. Math. Soc.} 70 (1967). \textsc{doi}: \href
	{https://doi.org/10.1090/memo/0070} {\nolinkurl
		{10.1090/memo/0070}}.
	%
	\bibitem[PV20]{PieVia2020}
	M. Pierobon and M. Viale. ``Boolean Valued Models, Presheaves, and
	\'{E}tal\'{e} Spaces''. June 26, 2020. arXiv: \href
	{https://arxiv.org/abs/2006.14852} {\nolinkurl {2006.14852}
		\texttt{[math.LO]}}.
	%
	\bibitem[SW75]{SaraWeis1975}
	D. Saracino and V. Weispfenning. ``On Algebraic Curves over
	Commutative Regular Rings''. In: \emph{Model Theory and Algebra:
		A Memorial Tribute to Abraham Robinson}. Ed. by D. Saracino and
	V. Weispfenning. Lecture Notes in Math. 498. Springer-Verlag, 1975,
	pp. 307–383. \textsc{doi}: \href {https://doi.org/10.1007/BFb0080985}
	{\nolinkurl {10.1007/BFb0080985}}.
	%
	\bibitem[Sip]{Sipos}
	A. Sipos, . ``Some Notes on Sheaf-Theoretic Model Theory''. \textsc{url}:
	\url {https://cs.unibuc.ro/~asipos/sheaves.pdf}.
	%
	\bibitem[Tak80]{Taka1980}
	M. Takahashi. ``Topological Powers and Reduced Powers''. In:
	\emph{Tokyo J. Math.} \textbf{3}.1 (1980), pp. 141–147. \textsc{doi}:
	\href {https://doi.org/10.3836/tjm/1270216088} {\nolinkurl
		{10.3836/tjm/1270216088}}.
	%
	\bibitem[TvD88]{TvD}
	A. S. Troelstra and D. van Dalen. \emph{Constructivism in
		Mathematics: An Introduction}. 2 vols. Stud. Logic Found. Math. 121,
	123. Elsevier, 1988.
	%
	\bibitem[Vol76]{Vol1976}
	H. Volger. ``The Feferman–Vaught Theorem Revisited''. In:
	\emph{Colloq. Math.} \textbf{36}.1 (1976), pp. 1–11. \textsc{doi}: \href
	{https://doi.org/10.4064/cm-36-1-1-11} {\nolinkurl
		{10.4064/cm-36-1-1-11}}.
	%
	\bibitem[Vol79]{Vol1979}
	H. Volger. ``Preservation Theorems for Limits of Structures and Global
	Sections of Sheaves of Structures''. In: \emph{Math. Z.} \textbf{166}.1
	(1979), pp. 27–54. \textsc{doi}: \href
	{https://doi.org/10.1007/BF01173845} {\nolinkurl
		{10.1007/BF01173845}}.
	%
	\bibitem[Wal81]{Walters1981}
	R. F. C. Walters. ``Sheaves and Cauchy-Complete Categories''. In:
	\emph{Cah. Topol. G\'{e}om. Diff\'{e}r. Cat\'{e}g.} \textbf{22}.3 (1981),
	pp. 283–286.
	%
	\bibitem[Wal82]{Walters1982}
	R. F. C. Walters. ``Sheaves on Sites as Cauchy-Complete Categories''. In:
	\emph{J. Pure Appl. Algebra} \textbf{24}.1 (1982), pp. 95–102.
	\textsc{doi}: \href {https://doi.org/10.1016/0022-4049(82)90061-5}
	{\nolinkurl {10.1016/0022-4049(82)90061-5}}.
	%
	\bibitem[Wei75]{Weis1975}
	V. Weispfenning. ``Model-Completeness and Elimination of Quantifiers
	for Subdirect Products of Structures''. In: \emph{J. Algebra}
	\textbf{36}.2 (1975), pp. 252–277. \textsc{doi}: \href
	{https://doi.org/10.1016/0021-8693(75)90101-5} {\nolinkurl
		{10.1016/0021-8693(75)90101-5}}.
	%
	\bibitem[Wer82]{Werner1982}
	H. Werner. ``Sheaf Constructions in Universal Algebra and Model
	Theory''. In: \emph{Universal Algebra and Applications}. Ed. by
	T. Traczyk. Banach Center Publ. 9. Polish Scientific Publishers PWN,
	1982, pp. 133–179. \textsc{doi}: \href
	{https://doi.org/10.4064/-9-1-133-179} {\nolinkurl
		{10.4064/-9-1-133-179}}.
	%
	\bibitem[Zaw83]{Zawa1983}
	M. Zawadowski. ``The Skolem–L\"{o}wenheim theorem in Toposes''. In:
	\emph{Studia Logica} \textbf{42}.4 (1983), pp. 461–475. \textsc{doi}:
	\href {https://doi.org/10.1007/BF01371634} {\nolinkurl
		{10.1007/BF01371634}}.
\end{thebibliography}
\end{document}